\tikzstyle{edge} = [draw, line width = 1pt ,-]
\tikzstyle{thickedge} = [draw=blue, line width = 2pt ,-]
\tikzstyle{vertex} = [ shape=circle, fill=black, inner sep = 2pt]
\newtheorem{definition}{Definition}
\newtheorem*{claim}{Claim}
\newtheorem{theorem}[definition]{Theorem}
\newtheorem{lemma}[definition]{Lemma}
\newcommand{\comment}[1]{}
\newcommand{\es}{\emptyset}
\newcommand{\cA}{\mathcal{A}}
\newcommand{\cB}{\mathcal{B}}
\newcommand{\cG}{\mathcal{G}}
\newcommand{\cS}{\mathcal{S}}
\newcommand{\bP}{\mathbb{P}}
\newcommand{\bE}{\mathbb{E}}
\DeclareMathOperator{\bin}{Bin}
\newcommand{\sm}{\setminus}
\definecolor{red}{RGB}{255,0,0}
\title{Long paths and cycles in random subgraphs of graphs with large minimum degree}
\author{Stefan Ehard and Felix Joos}
\date{}
\begin{document}
\maketitle

\begin{abstract}
\noindent
For a graph $G$ and $p\in [0,1]$, 
let $G_p$ arise from $G$ by deleting every edge mutually independently with probability $1-p$.
The random graph model $(K_n)_p$ is certainly the most investigated random graph model and also known as the $G(n,p)$-model.
We show that several results concerning the length of the longest path/cycle naturally translate
to $G_p$ if $G$ is an arbitrary graph of minimum degree at least $n-1$.

For a constant $c$, we show that asymptotically almost surely 
the length of the longest path is at least $(1-(1+\epsilon(c))ce^{-c})n$
for some function $\epsilon(c)\to 0$ as $c\to \infty$,
and the length of the longest cycle is a least $(1-O(c^{- \frac{1}{5}}))n$.
The first result is asymptotically best-possible.
This extents several known results on the length of the longest path/cycle of a random graph in the $G(n,p)$-model.
\end{abstract}

\section{Introduction}

Around 1960 Erd\H{o}s and Renyi proved the first results about random graphs -- 
especially about graphs on $n$ vertices where every possible edge is present independently with probability $p$, 
which is nowadays known as the $G(n,p)$-model.
It is not an overstatement saying that this field has grown enormously since then
and for numerous graph parameters the typical value is (precisely) known for large $n$.
In particular, the lengths of paths and cycles are investigated.
As for any $\epsilon>0$ and $p \geq \frac{(1+ \epsilon) \log n}{n}$ a.a.s.\ a graph in $G(n,p)$ is hamiltonian,
we consider the length of a longest path/cycle if $p= \frac{c}{n}$ for some constant $c>1$.
A~series of papers~\cite{AKS81,Bol82,BFF84,Fri86,Veg79} finally led to the following theorem,
where 
$$\alpha(c)= \sup_{\alpha\geq 0}\{G\in G(n,cn^{-1}) \text{ contains a path of length }\alpha n \text{ a.a.s.}\}$$
and $\beta(c)$ analogously for the length of cycles.

\begin{theorem}
There exists a function $\epsilon(c) \rightarrow 0$ as $c \rightarrow \infty$ such that
\begin{align*}
	\alpha(c),\beta(c)= 1-(1+\epsilon(c))ce^{-c}.
\end{align*} 
\end{theorem}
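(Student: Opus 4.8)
\emph{Upper bound.} This direction is soft. Let $D$ be the set of vertices of $G=G(n,cn^{-1})$ of degree at most $1$. Linearity of expectation gives $\bE|D|=n(1-cn^{-1})^{n-1}+n(n-1)(cn^{-1})(1-cn^{-1})^{n-2}=(1+o(1))(1+c)e^{-c}n$, and since these events are only weakly correlated a routine second-moment computation shows $|D|=(1+o(1))(1+c)e^{-c}n$ a.a.s. A vertex of degree $0$ lies on no cycle and on no path of positive length, while a vertex of degree $1$ lies on no cycle and on a path only as an endpoint; hence a.a.s.\ every cycle of $G$ misses at least $|D|$ vertices and every path at least $|D|-2$. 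Thus $\alpha(c),\beta(c)\le 1-(1+c)e^{-c}=1-(1+\tfrac1c)ce^{-c}$, and since (together with the lower bound below) $1-\alpha(c)$ and $1-\beta(c)$ are squeezed between $(1+\tfrac1c)ce^{-c}$ and $(1+o_c(1))ce^{-c}$, the claimed identity holds with $\epsilon(c)\to0$. (It suffices throughout to treat $c$ large, since $\epsilon(c)$ is only constrained as $c\to\infty$.)

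\textbf{Lower bound, set-up.} The plan is the P\'osa rotation--extension method, applied after ``cleaning'' the low-degree part. Couple $G$ as $G_1\cup G_2$ with $G_1\sim G(n,p_1)$ and $G_2\sim G(n,p_2)$ independent, $(1-p_1)(1-p_2)=1-cn^{-1}$, $p_1n=c(1-\delta)$ and $p_2n=c\delta$ for a small $\delta=\delta(c)$ with $c\delta\to0$, so that $G_1$ is essentially as dense as $G$ and the edges of $G_2$ will serve as ``boosters''. The key step is to produce a vertex set $W$ with $|W|=(1+o_c(1))ce^{-c}n$ a.a.s., obtained by iteratively deleting from $G_1$ all vertices of degree at most $1$ together with the vertices of a fixed finite list of \emph{rare} bad local configurations (two adjacent small-degree vertices, two small-degree vertices with a common neighbour, degree-$2$ vertices adjacent to a degree-$2$ vertex, and the ensuing cascades), so that $G_1':=G_1-W$ is a.a.s.\ connected with minimum degree $\ge2$ and with its degree-$2$ vertices pairwise non-adjacent and in controlled position. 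Each pattern on the list ties together at least two low-degree vertices, so a first-moment estimate bounds its expected number of copies in $G(n,cn^{-1})$ by $\mathrm{poly}(c)\,e^{-2c}n=o(ce^{-c}n)$; as the number of vertices of degree at most $1$ in $G_1$ is $(1+o_c(1))(1+c)e^{-c}n$, this keeps $|W|$ at the required size (the $o(ce^{-c}n)$ vertices discarded here that are in fact usable via a $G_2$-edge cost nothing).

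\textbf{Lower bound, execution.} Contract each maximal path of degree-$2$ vertices of $G_1'$ to a single ``mandatory'' edge; then a spanning cycle of $G_1'$ corresponds to a Hamilton cycle of the resulting auxiliary multigraph $H$ using all mandatory edges, $H$ has minimum degree $\ge3$, and by the cleaning each vertex of $H$ meets at most two mandatory edges. A standard expansion computation --- using that $G(n,\Theta(c)/n)$ is an expander and that the pathological small patterns have been removed --- shows that a.a.s.\ $|N_H(S)\setminus S|\ge2|S|$ for every $S$ with $|S|\le\eta|V(H)|$, for some $\eta=\eta(c)>0$. Now run rotation--extension on $H$: by this expansion, from any longest path one reaches $\Omega(|V(H)|)$ endpoints through rotations, so while no spanning cycle respecting the mandatory edges has been found there are $\Omega(n^2)$ booster pairs; revealing the edges of $G_2$ one pair at a time (each present with probability $p_2$, independently of $G_1$) a.a.s.\ a booster appears within $O(\log n)$ tries, and after $O(\log n)$ rounds $H$ has the desired Hamilton cycle. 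Lifting it through the contractions yields a cycle of $G$ on all of $V(G_1')$, of length $n-|W|=(1-(1+o_c(1))ce^{-c})n$; deleting one edge gives a path of the same length. Hence $\alpha(c),\beta(c)\ge1-(1+o_c(1))ce^{-c}$, matching the upper bound.

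\textbf{Main obstacle.} Everything here except the \emph{size} of the discarded set is routine: optimality forbids sacrificing $\Theta(n)$ vertices, yet $G(n,cn^{-1})$ already contains $\Theta(c\cdot ce^{-c})n$ vertices of degree $2$, so almost every low-degree vertex must be retained and woven into the long cycle, and only an $o_c(1)$-fraction of the unavoidable $ce^{-c}n$ may be given up. The real work is thus (a) calibrating the finite obstruction list so that removing merely $(1+o_c(1))ce^{-c}n$ vertices already gives the expansion above, and (b) making rotation--extension cope with the abundant but individually harmless degree-$2$ vertices --- the classical difficulty in Hamiltonicity proofs for sparse random graphs, here aggravated by the demand for the sharp constant rather than merely a constant. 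The moment estimates, the expansion lemma, and the booster argument are all standard.
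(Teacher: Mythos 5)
The paper contains no proof of this statement: it is the classical $G(n,p)$ theorem, quoted from the cited series of papers (de la Vega, Ajtai--Koml\'os--Szemer\'edi, Bollob\'as, Bollob\'as--Fenner--Frieze, Frieze), the sharp lower bound being Frieze's theorem; the method the present paper later adapts in Section~\ref{sec:pseudo-clique} is Frieze's classification of small-degree vertices and the $2$-core (the sets $W$, $X$, $Y$), not rotation--extension with sprinkling. Your upper bound via the vertices of degree at most one is fine. The lower bound, however, has a genuine gap exactly where the difficulty sits, and your own ``Main obstacle'' paragraph concedes that steps (a) and (b) are left undone, so what you have is a plan rather than a proof.

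Concretely, the booster phase does not survive quantitative scrutiny at constant average degree. With $p_2=c\delta/n$ and $c\delta\to 0$, a single queried pair lies in $G_2$ with probability $\Theta(1/n)$, so the claim that ``a booster appears within $O(\log n)$ tries'' is false ($\Theta(n)$ fresh queries are needed per successful booster), and there is no justification that only $O(\log n)$ boosters suffice: $2$-expansion of sets of size at most $\eta|V(H)|$ gives, via P\'osa, a longest path of length only $\Omega(\eta|V(H)|)$, so before sprinkling the Hamiltonian deficiency of $H$ is bounded only by $\Theta_c(n)$. Closing a deficiency of $\Theta_c(n)$ sequentially, at $\Theta(n/(c\delta))$ queries per success, would consume $\Theta_c\bigl(n^2/(c\delta)\bigr)$ distinct candidate pairs, exceeding the $O(\eta^2n^2)$ boosters available; equivalently, the entire sprinkling budget of roughly $c\delta n/2=o_c(n)$ edges cannot supply the needed boosters unless the pre-sprinkling deficiency is already $O(\eta^2 c\delta\, n)$ --- and establishing that is essentially the theorem you are trying to prove. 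You also cannot enlarge $\delta$: if $c\delta\not\to0$, the vertices of degree at most $1$ in $G_1$ that are rescued by $G_2$-edges number $\Theta(c^2\delta e^{-c})n$, which is no longer $o(ce^{-c}n)$, and the sharp constant is lost. A secondary unresolved point is that rotation--extension must be run subject to keeping every mandatory (contracted degree-$2$) edge; rotations delete path edges, so they must avoid the mandatory ones, which is not the standard lemma and weakens the endpoint count you rely on. By contrast, Frieze's argument (and the paper's Section~\ref{sec:pseudo-clique} adaptation) avoids this budget problem by working deterministically with the $2$-core minus the exceptional sets $W\cup X\cup Y$ rather than by sprinkling boosters.
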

Let us consider a more general random graph model.
For a graph $G$, we denote by $G_p$ the random subgraph obtained by deleting every edge independently with probability $1-p$ from the edge set of $G$. 
Thus $(K_{n})_p$ is a uniformly at random chosen member of $G(n,p)$.
In this paper we consider the typical asymptotic behavior of $(G_k)_p$ instead of $(K_n)_p$ where $G_k$ is a simple graph of minimum at least $k$.
In our setting $p$ depends on $k$ instead of the order of $G_k$.
We denote by $\cG$ the set of all graph sequences $G_1,G_2,\ldots$ such that $G_k$ has minimum degree at least $k$.
We define
\begin{align*}
	\alpha'(c)= \inf_{(G_k)_{k\geq 1}\in \cG} \ \sup_{\alpha\geq 0}\{(G_k)_p\text{ contains a path of length }\alpha k \text{ a.a.s.}\}
\end{align*}
and $\beta'(c)$ analogously for cycles.
It is clear that $\alpha'(c)\leq \alpha(c)$ and $\beta'(c)\leq \beta(c)$.
We prove that there is essentially no difference between $\alpha'(c)$ and $\alpha(c)$
and our second contribution is a lower bound on $\beta'(c)$.

\begin{theorem}\label{thm: path}
There exists a function $\epsilon(c) \rightarrow 0$ as $c \rightarrow \infty$ such that
\begin{align*}
	\alpha'(c)= 1-(1+\epsilon(c))ce^{-c}.
\end{align*}
\end{theorem}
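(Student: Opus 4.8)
\medskip
\noindent\textbf{Proof plan.}
The inequality $\alpha'(c)\le 1-(1+\epsilon(c))ce^{-c}$ is immediate: taking $G_k=K_{k+1}\in\cG$, the graph $(K_{k+1})_p$ has the distribution of $G(k+1,c/k)$, and since $p(k+1)=(1+o(1))c$ the Theorem quoted above gives a longest path of length $(1-(1+o(1))ce^{-c})k$ a.a.s.\ (absorb the $o(1)$ into $\epsilon(c)$). So the real content is the reverse inequality: I must produce a function $\epsilon(c)\to 0$ such that for every graph $G$ with $\delta(G)\ge k$ and $p=c/k$, the probability that $(G)_p$ contains a path of length at least $(1-(1+\epsilon(c))ce^{-c})k$ tends to $1$ as $k\to\infty$, \emph{uniformly in $G$}. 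Since $\epsilon(c)$ may tend to $0$ arbitrarily slowly I may assume $c$ is a large constant, and the plan is to reproduce the sharp analysis of $G(n,c/n)$ with the minimum degree $k$ playing the role of $n$.

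First I would locate a large, robustly connected core. Run a breadth-first exploration of $(G)_p$ from a vertex $v_0$: while fewer than $k/2$ vertices have been reached, each newly processed vertex has at least $k/2$ undiscovered $G$-neighbours, each retained with probability $p$, so the exploration dominates a Galton--Watson process of mean offspring at least $c/2>1$ and reaches size $\ge k/2$ with probability bounded away from $0$; restarting from fresh vertices (available since $\delta(G)\ge k$ forces $|V(G)|\ge k+1\to\infty$) and observing that the vertices in small components cannot all have atypically low degree upgrades this to: a.a.s.\ $(G)_p$ has a component $C$ with $|C|\ge k$ (indeed much larger when $|V(G)|$ is large, which only helps). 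Let $C'$ be the $2$-core of $C$, obtained by iteratively deleting vertices of degree $\le1$. Since $\bP[\deg_{(G)_p}(v)\le1]\le (1+c)(1-p)^{k-1}\le(1+\epsilon_1(c))ce^{-c}$ for every $v$, a first-moment estimate over $C$ (of size $O(k)$, a larger $C$ again only helping) shows that a.a.s.\ at most $(1+\epsilon_2(c))ce^{-c}\,k$ vertices of $C$ are peeled off, the contribution of later peeling rounds and of pendant trees with several leaves being $e^{-\Omega(c)}k$ and hence absorbed into $\epsilon_2$. Thus a.a.s.\ $|V(C')|\ge(1-(1+\epsilon_2(c))ce^{-c})k$ and $\delta(C')\ge2$.

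It then remains to find a path through almost all of $C'$, and here the crucial point is that expansion is forced by the minimum degree of $G$: if $S\subseteq V(C')$ with $|S|<(k+1)/2$, then as every vertex of $S$ has at least $k$ neighbours in $G$ one cannot have $N_G(S)\subseteq S\cup T$ with $|T|<|S|$, so $|N_G(S)\setminus S|\ge|S|$; passing to $(G)_p$ with standard concentration (and ruling out small dense subgraphs to handle tiny $S$) gives a.a.s.\ $|N_{(G)_p}(S)\setminus S|\ge|S|$ for all $\omega\le|S|\le|V(C')|/2$, and more than $|S|$ when $S$ is not too large. Feeding this into P\'osa's rotation--extension argument applied to a longest path $P$ of $C'$ — no sprinkling is used or available, exactly as in $G(n,c/n)$ the edges already present in $C'$ suffice — each failed extension produces exponentially many rotated endpoints, all of whose neighbourhoods must meet $V(P)$ unless $P$ can be extended or already has length $\ge k$; iterating yields a path of $C'$ missing only $o(ce^{-c})k$ of its vertices (when $|V(C')|\gg k$ simply a path of length $\ge k$). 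Together with the previous paragraph this gives a path of length at least $(1-(1+\epsilon(c))ce^{-c})k$ with $\epsilon(c)=\epsilon_2(c)+o(1)\to0$.

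The main obstacle is to carry all of this out uniformly over \emph{every} host graph $G$ with $\delta(G)\ge k$ while losing only the factor $1+\epsilon(c)$ in front of $ce^{-c}$. Because $ce^{-c}$ is exponentially small, no sprinkling argument that reserves a constant fraction of the edges is admissible — shrinking $c$ by a constant would multiply $ce^{-c}$ by a constant — so $(G)_p$ must be exposed all at once, and both the precise size of the $2$-core and the precise near-spanning guarantee for the rotation argument must be controlled under the conditioning the peeling imposes; this is exactly the sharp bookkeeping achieved by the classical refined analyses of $G(n,c/n)$, which must be transported to a setting with neither vertex-transitivity nor a global density bound, only $\delta(G)\ge k$. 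A secondary difficulty is that $|V(G)|$ may be anything from roughly $k$ up to arbitrarily large: the regime $|V(G)|\approx k$, where a genuinely near-spanning path of $C'$ (or of a large connected piece of it) is required, and the regime $|V(G)|$ enormous, where $C'$ is itself enormous, must both be handled so that all error terms are bounded by a single $\epsilon(c)$ and the probability bounds are uniform in $G$.
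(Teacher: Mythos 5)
The upper bound $\alpha'(c)\le\alpha(c)$ via $K_{k+1}$ is fine, but the lower bound --- the whole content of the theorem --- is not actually established by your outline. The decisive step, extracting from the $2$-core a path that misses only $o(ce^{-c})k$ of its vertices, is exactly the hard part, and the expansion-plus-P\'osa route you sketch does not deliver it at density $p=c/k$ with $c$ a \emph{constant}. The statement you invoke, that a.a.s.\ $|N_{(G)_p}(S)\setminus S|\ge|S|$ (let alone the factor-$2$ expansion that P\'osa's rotation lemma actually requires) for \emph{all} sets with $\omega\le|S|\le|V(C')|/2$, is not available: for a single set of size $s$ the failure probability is only $e^{-\Theta(cs)}$ with $c$ fixed, and since $|V(G)|$ may be arbitrarily large compared with $k$ a union bound over $\binom{n}{s}$ sets is hopeless; even inside a single near-clique on $k+1$ vertices, small sets fail to expand with probability that is merely exponentially small in $c$, which is precisely the order $ce^{-c}$ you are trying to control. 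Hence rotation--extension cannot be ``fed'' this expansion to produce a near-spanning path with an error term $o(ce^{-c})k$; that sharp bookkeeping is the content of Frieze's theorem for $G(n,c/n)$, and your final paragraph concedes that it ``must be transported'' without saying how --- so the proposal stops exactly where the real work begins. (Minor additional slip: the claim that a.a.s.\ some component of $(G)_p$ has at least $k$ vertices is false, e.g.\ for a disjoint union of copies of $K_{k+1}$, though this is cosmetic compared to the main gap.)

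The paper circumvents redoing the sharp analysis for general $G$ by a dichotomy that your proposal rules out prematurely. Either $G$ contains a set $V'$ with $(1-\tfrac{1}{\log k})k\le|V'|\le(1+10\epsilon)k$ inducing minimum degree at least $(1-\tfrac{2}{\log k})k$ --- a pseudo-clique --- in which case the transported Frieze-type result (Theorem~\ref{thm: pseudo-clique}) gives a cycle of the sharp length; or no such $V'$ exists, and then, contrary to your assertion that ``no sprinkling argument is admissible'', the paper sprinkles in three rounds of density $\tfrac{c}{3k}$: Theorem~\ref{thm: cycle} gives a cycle of length $(1-O(c^{-1/5}))k$ in the first round, and Lemmas~\ref{lem: bippath} and~\ref{lem: logk-starting-path} graft additional paths through the vertices outside this cycle in the later rounds, yielding a path of length at least $k$ --- strictly more than the target --- so the constant-factor loss in $c$ is harmless in this case. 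Your observation that sprinkling cannot preserve the constant in front of $ce^{-c}$ is correct, but it is exactly why the paper confines sprinkling to the regime where it overshoots the target; this case distinction is the idea missing from your proposal, and without it (or a genuine transport of the refined $G(n,c/n)$ analysis to arbitrary hosts with $\delta(G)\ge k$) the argument does not go through.
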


\begin{theorem}\label{thm: cycle}
We have
\begin{align*}
	\beta'(c)=1-O\left(c^{- \frac{1}{5}}\right).
\end{align*}
\end{theorem}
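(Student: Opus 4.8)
\emph{Proof plan.} Since $\beta'(c)\le\beta(c)\le\alpha(c)<1$ by Theorem~1, only the lower bound needs proof: there is a constant $C>0$ such that for every $(G_k)_{k\ge 1}\in\cG$ and $p=c/k$, a.a.s.\ $(G_k)_p$ has a cycle of length at least $(1-Cc^{-1/5})k$. Fix $G=G_k$ with $\delta(G)\ge k$, set $n=|V(G)|$ and $\mu=c^{-1/5}$, and expose $G_p$ in two independent rounds $G_{p_1},G_{p_2}$ with $p_1=(1-\mu)c/k$ and $p_2=\mu c/(2k)$, so that $G_{p_1}\cup G_{p_2}$ is stochastically dominated by $G_p$: round one will supply a long path, round two will close it. The core of the argument is a dichotomy on whether $G$ contains a bounded-size dense spot.

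If $G$ contains $U\subseteq V(G)$ with $|U|\le e^{O(c)}k$ and $\delta(G[U])\ge k/2$, then $G[U]_p$ is, up to the harmless mismatch of $p=c/k$ with $\Theta(1)/|U|$, a random graph of average degree $\Theta(c)$ on $\Theta_c(k)$ vertices, and the cycle half of Theorem~1 applied inside $U$ already yields a cycle of length $(1-o_c(1))|U|\ge(1-o_c(1))k$, far more than required. So assume there is no such $U$; the point of this hypothesis, to be extracted via a peeling/expansion argument of Krivelevich--Lee--Sudakov type, is that $G$ cannot confine the rotations of a long path to a small set -- concretely, every $W$ with $1\le|W|\le k/c$ has $\Omega(k)$ external $G$-neighbours, and since $c$ is large this expansion is inherited by $G_{p_1}$ for the few sets that actually arise below.

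Now apply Theorem~\ref{thm: path} to $G_{p_1}$ with parameter $c_1=(1-\mu)c$: a.a.s.\ $G_{p_1}$ has a path on at least $\bigl(1-(1+\epsilon(c_1))c_1e^{-c_1}\bigr)k$ vertices, and since $c_1e^{-c_1}=o(c^{-1/5})$ we may fix a \emph{longest} path $P=v_1\cdots v_\ell$ of $G_{p_1}$ with $\ell\ge(1-\mu)k$; then every $G_{p_1}$-neighbour of $v_1$ and of $v_\ell$ lies on $P$. Run P\'osa's rotation--extension process in $G_{p_1}$ from $v_\ell$ with $v_1$ fixed, producing the set $A\subseteq V(P)$ of attainable endpoints, and symmetrically $B$ from $v_1$. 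Using that $P$ is longest together with the expansion above, P\'osa's lemma forces $|A|,|B|>k/c$, and iterating the rotations one arranges that $A$ meets the first $O(\mu\ell)$ vertices of $P$ and $B$ the last $O(\mu\ell)$ vertices in $\Omega(\mu\ell)$ vertices each. It then suffices to expose, in round two, a single $G_{p_2}$-edge joining a re-rotated copy of $A$ to one of $B$: such an edge closes a cycle that omits only the two discarded end segments of $P$, hence has length $\ge(1-O(\mu))\ell\ge(1-O(c^{-1/5}))k$. Exhibiting $\Omega(\mu k^2/c)$ candidate $G$-edges between the relevant rotation sets (again from $\delta(G)\ge k$ in this sparse regime), round two fails with probability at most $(1-p_2)^{\Omega(\mu k^2/c)}=\exp(-\Omega(\mu^2k))=o(1)$.

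The delicate point -- and the reason cycles yield only $1-O(c^{-1/5})$ where paths yield $1-o(ce^{-c})$ -- is that, unlike in $G(n,p)$, a vertex of $P$ need not have \emph{any} neighbour on $P$, so neither the $G_{p_1}$-rotations nor the $G_{p_2}$-sprinkling comes for free; the dense/sparse dichotomy is exactly what excludes the pathology where $P$ is almost induced with essentially all its edges leaving $V(P)$. Even in the sparse case one must still deal with the possibility that $R=V(G)\setminus V(P)$ is large and absorbs most edges at $P$: there $\delta(G)\ge k$ makes each vertex of $P$ have $\ge\mu k$ neighbours in $R$, and I would reroute and lengthen $P$ through $R$ with a bounded number of extra sprinkling rounds before invoking the rotations. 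Finally, the exponent $\tfrac15$ is not intrinsic: $\mu=c^{-1/5}$ is the choice that simultaneously absorbs the $ce^{-c}$-loss of Theorem~\ref{thm: path} and the $O(\mu)$-loss of the rotation/sprinkling step while keeping the second-round density ($\approx\mu c$ per vertex) large enough to beat the entropy of the rotation endpoint-sets, and sharpening any of these estimates would improve the exponent.
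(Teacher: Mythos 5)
Your proposal does not go through, and the gaps are at the heart of the matter rather than at the edges. First, invoking Theorem~\ref{thm: path} to seed the argument is circular in this paper: the paper proves Theorem~\ref{thm: path} \emph{using} Theorem~\ref{thm: cycle} (this is fixable, since the weaker $\alpha'(c)=1-O(c^{-1/2})$ bound of Krivelevich--Lee--Sudakov would do, but it signals that the intended logical order is the reverse of yours). Second, your dense-spot branch applies ``the cycle half of Theorem~1'' to $G[U]_p$ where $G[U]$ is merely a graph of minimum degree $k/2$ on up to $e^{O(c)}k$ vertices; Theorem~1 is a statement about $(K_n)_p$ only, and extending it to such graphs is essentially the theorem you are trying to prove (the paper's pseudo-clique section only treats $n\le(1+\gamma)k$, and even granting an extension, minimum degree $k/2$ would only yield a cycle of length about $k/2$). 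Third, and most seriously, the P\'osa rotation--extension step does not work at density $p_1=\Theta(1/k)$: to force $|A|,|B|>k/c$ you need the \emph{random} endpoint sets to expand in $G_{p_1}$, not in $G$, and small sets in a graph of constant average degree simply do not expand; you cannot restrict attention to ``the few sets that actually arise'' because those sets are determined by $G_{p_1}$, so no union bound is available. This is precisely why the classical sparse-regime results (and this paper) avoid naive rotation.

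Finally, the sprinkling step rests on the claim that there are $\Omega(\mu k^2/c)$ candidate $G$-edges between the two rotation sets, ``from $\delta(G)\ge k$''. This does not follow: since $n$ may be far larger than $k$, the $G$-neighbours of vertices near one end of $P$ may lie entirely off $P$ or in its middle, and nothing in your argument localizes $G$-edges between the two ends --- a pathology you yourself flag but then only wave at (``reroute through $R$ with extra sprinkling rounds''). Producing such localized, still-random edges is exactly what the paper's proof (following Riordan) is engineered to do: it runs the DFS algorithm on $G_p$, notes that only $O(nk/c)$ edges are tested (Lemma~\ref{lem: tested edges}), so almost all vertices remain \emph{free} with $(1-\epsilon)k$ untested edges each (Lemma~\ref{lem: all free}); property~(\ref{DFS4}) forces every untested edge to join two vertices on a vertical path of the DFS forest, and the up/down, height and skinny analysis (Lemmas~\ref{lem: heighthk} and~\ref{lem: longpath}) yields a vertical path of length $4k$ along which many vertices have $\Omega(\epsilon k)$ untested back-edges to ancestors at distance between $\epsilon k$ and $(1-5\epsilon)k$; these edges are still present independently with probability $p$, and a short ladder of them closes a cycle of length at least $(1-5\epsilon)k$ with $\epsilon=c^{-1/5}$. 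If you want to salvage your outline, the missing ingredient is a structural mechanism of this kind that concentrates many unexposed $G$-edges between far-apart segments of one path; minimum degree alone does not provide it.
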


Thus Theorem~\ref{thm: path} describes precisely the asymptotic behavior of $\alpha'(c)$ as $c \rightarrow \infty$
improving a result due to Krivelevich, Lee and Sudakov~\cite{KLS15} who showed that $\alpha'(c)=1-O(c^{- \frac{1}{2}})$.
In addition, it generlizes results concerning the length of the longest path in the $G(n,p)$-model due to 
Ajtai, Koml{\'o}s and Szemer{\'e}di~\cite{AKS81},
Fernandez de la Vega~\cite{Veg79},
Bollob\'as~\cite{Bol82},
Bollob\'as, Fenner and Frieze~\cite{BFF84}, and Frieze~\cite{Fri86}.   

Theorem~\ref{thm: cycle} improves a result of Krivelevich, Lee and Sudakov~\cite{KLS15} and Riordan~\cite{Rio14}
implying $\beta'(c)=1-o(1)$.
It also generalizes several results of the length of the longest cycle in the $G(n,p)$-model.

Note that the questions of hamiltonicity in the $G(n,p)$ setting
translates to the question whether $G_k$ has a cycle of length at least $k+1$.
These extensions are successfully settled by Krivelevich, Lee and Sudakov~\cite{KLS15}, and 
by Glebov and Naves and Sudakov~\cite{GNS14}.

\section{Preliminaries}

We will frequently need to show that a binomial random variable is very close to its expected value and 
use for these purposes Chernoff's inequality.
\begin{theorem}[Chernoff's inequality \cite{AS04}]\label{thm: cher}
If X is a binomial distributed random variable with \mbox{$X\sim \bin\left(n,p\right)$} and $0<\lambda \leq np= \bE X$, 
then 
\begin{align*}
	\bP\left[|X-np|\geq\lambda\right]\leq 2e^{-\frac{\lambda^2}{3np}}.
\end{align*}
\end{theorem}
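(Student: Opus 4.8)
The plan is to establish this by the classical exponential--moment (Chernoff/Bernstein) argument: bound the two one--sided tails of $X$ separately by applying Markov's inequality to $e^{tX}$ and to $e^{-tX}$, optimise the free parameter $t>0$ in each case, and then combine the two estimates by a union bound, which accounts for the factor $2$. Writing $X=\sum_{i=1}^{n}X_i$ as a sum of independent $\bin(1,p)$ variables, the starting point is the moment generating function estimate $\bE[e^{tX}]=(1-p+pe^{t})^{n}\le e^{np(e^{t}-1)}$, which follows from $1+y\le e^{y}$, together with the analogous bound $\bE[e^{-tX}]\le e^{np(e^{-t}-1)}$.

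For the upper tail I would apply Markov's inequality to $e^{tX}$: for every $t>0$,
\begin{align*}
	\bP[X\ge np+\lambda]\ \le\ e^{-t(np+\lambda)}\,\bE[e^{tX}]\ \le\ \exp\!\big(np(e^{t}-1)-t(np+\lambda)\big),
\end{align*}
and then optimise by taking $e^{t}=1+\lambda/(np)$, the unique minimiser of the exponent (and $>1$, so $t>0$). Writing $x:=\lambda/(np)$, which lies in $(0,1]$ precisely by the hypothesis $\lambda\le np$, the right--hand side becomes $\exp\!\big(-np\,((1+x)\log(1+x)-x)\big)$, so the upper--tail bound reduces to the scalar inequality $(1+x)\log(1+x)-x\ge x^{2}/3$ on $(0,1]$. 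The lower tail is treated symmetrically using $e^{-tX}$ and the choice $e^{-t}=1-\lambda/(np)$ when $\lambda<np$, which reduces matters to $x+(1-x)\log(1-x)\ge x^{2}/3$ on $[0,1)$; the borderline case $\lambda=np$ is immediate, since then $\bP[X\le 0]=(1-p)^{n}\le e^{-np}\le e^{-np/3}=e^{-\lambda^{2}/(3np)}$.

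Both scalar inequalities I would verify by a short calculus argument. In each case the function ``left side $-\,x^{2}/3$'' vanishes together with its first derivative at $x=0$; for the lower--tail function the second derivative is $\tfrac{1}{1-x}-\tfrac23>0$ throughout $[0,1)$, and for the upper--tail function the first derivative is $\log(1+x)-\tfrac23 x$, which is nonnegative on $[0,1]$ because $\log(1+x)$ is concave and so dominates the chord $x\log 2\ge\tfrac23 x$; in both cases the function is therefore nondecreasing, hence nonnegative, on the relevant interval. The only genuinely delicate point in the whole proof is this last step: the constant $\tfrac13$ is tied to the restriction $x\le1$, i.e.\ $\lambda\le np$, and that is exactly where the hypothesis is used (a larger range of $\lambda$ would force a weaker constant). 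Adding the two one--sided estimates then yields $\bP[|X-np|\ge\lambda]\le 2e^{-\lambda^{2}/(3np)}$.
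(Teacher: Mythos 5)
Your proposal is correct: the exponential--moment bounds, the optimal choices of $t$, the reduction to the two scalar inequalities $(1+x)\log(1+x)-x\ge x^2/3$ and $x+(1-x)\log(1-x)\ge x^2/3$ on the range $x=\lambda/(np)\in(0,1]$ (where the hypothesis $\lambda\le np$ is indeed exactly what is needed for the constant $\tfrac13$), and the union bound giving the factor $2$ are all sound, as is your separate treatment of the borderline case $\lambda=np$. The paper itself offers no proof of this statement --- it is quoted from \cite{AS04} --- and your argument is the standard Chernoff/Bernstein proof found there, so there is nothing to compare beyond noting that you have supplied the calculus verification of the constant explicitly.
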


Several results in this paper are based on the depth-first-search algorithm (DFS-algorithm) which is a frequently used exploration method of graphs. 
We briefly describe this algorithm and introduce some notation along the way.
Several recent results apply this algorithm to random graphs leading to very nice and short proofs~\cite{KLS15, KS13, Rio14}.

The DFS-algorithm is an algorithm traversing a graph such that all vertices of a given graph $G$ are finally visited and outputs a rooted spanning forest $T$ of $G$. 
It proceeds in the following way.

At any step, there is a partition of the vertex set $V(G)$ into three sets $R$, $S$ and $U$. 
The set $U$  contains the vertices that have not yet been visited during the exploration, 
$R$ denotes the set of vertices whose exploration is complete, 
and all the remaining vertices that are currently under exploration are contained in~$S$.  
The vertices of $S$ are kept in a stack, 
which is a last-in-first-out data structure.

The algorithm starts with $U=V(G)$ and $R=S=\emptyset$ 
and executes the following rounds until every vertex is explored, 
i.e. $R=V(G)$ and $S=U=\emptyset$.
\begin{itemize}
	\item If $S=\emptyset$, then some unreached vertex $v$ in $U$ is moved to $S$. 
	This vertex $v$ will be the root of a new component of our rooted spanning forest $T$.
	\item Otherwise, let $v$ be the top element of the stack $S$ (the \emph{last-in} vertex). 
	The algorithm queries whether $v$ has some neighbor $w$ in $U$. 
	If so, $w$ is placed on top of the stack $S$. 
	If $v$ has no neighbor in $U$, it is completely explored and is moved to $R$.
	\item As long as $U\neq \es$, the algorithm moves to the next round.
\end{itemize}

In each round of the algorithm there is exactly one vertex moved either from $U$ to $S$ or from $S$ to $R$. 
So indeed, after $2|V(G)|$ rounds every vertex has been moved from $U$ to $R$ through $S$ and the algorithm terminates with a rooted spanning forest $T$.

The following properties of the DFS-algorithm are important to us:
\begin{enumerate}[(I)]
	\item\label{DFS1}Every positively answered query about a neighbor in $U$ increases the size of $R\cup S$ by exactly one.
	\item\label{DFS2}The set $S$ always spans a path.
	\item\label{DFS3}At any round of the algorithm, all possible edges between the set $R$ and $U$ have been
 queried and answered negatively.
	\item\label{DFS4}Every edge $e=uv$ of the graph $G$ which is not tested during the exploration of $G$ joins two vertices on some vertical path in the rooted spanning forest $T$ (because otherwise the algorithm would have queried for the edge $uv$ during the exploration). 

\end{enumerate}

We will use the DFS-algorithm to explore the random graph $G_p$. 
Therefore, we assume that the algorithm already knows the underlying graph $G$ and all the edges of $G$. 
The DFS-algorithm only queries about these edges of $G$ during the exploration of $G_p$. 
That is, if the DFS-algorithm looks for neighbors of some vertex $v$, 
it only considers the neighbors $w$ of $v$ in $G$,
and queries whether this vertex is also a neighbor of $v$ in $G_p$. 
We receive a positive answer of each such query independently with probability $p$. 
In this way, following this algorithm,
we explore a rooted spanning forest of our random graph $G_p$.
Note that by definition the answer of a query does not depend on the answers of the previous queries.
We say an edge of $G$ is \emph{tested} if the DFS-algorithm queried whether this edge is in $G_p$ and otherwise we say it is \emph{untested}.

Throughout the paper we consider  graphs $G_k$ of minimum degree at least $k$.
Almost all our results include asymptotic statements
and an event occurs asymptotically almost surely (a.a.s.) if the probability that this event occurs tends to $1$ as \mbox{$k\to\infty$}.
Furthermore, several inequalities in our computations are only correct if $k$ is large enough
and for the purpose of readability we often drop the index $k$ and simple write $G$.

\section{Auxiliary Results}

Before we begin with the proofs of Theorem~\ref{thm: path} and \ref{thm: cycle}, 
we cite and prove some results for later use.
The first one uses a nice and direct analysis of the DFS-algorithm.

\begin{lemma}[Krivelevich, Lee, Sudakov~\cite{KLS15}]\label{lem: bippath}
Let $p=\frac{c}{k}$ for $c$ sufficiently large, and let $G$ be a graph of minimum degree at least $k$. 
If $G$ is bipartite, then $G_p$ a.a.s.\ contains a path of length $\left(2-6c^{-1/2}\right)k$.
\end{lemma}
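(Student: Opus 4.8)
The plan is to run the DFS-algorithm on $G_p$ and track the sizes of the sets $R$, $S$, and $U$ during the exploration, following the approach of Krivelevich, Lee and Sudakov. Since $G$ is bipartite with parts $A$ and $B$, and every vertex has degree at least $k$, we have $|A|, |B| \geq k$. Set $p = c/k$ with $c$ large, and let $t = (1 - 3c^{-1/2})k$; the goal is to show that a.a.s.\ at some round the stack $S$ contains at least $t$ vertices of each side, so that by property~\eqref{DFS2} it spans a path, which (being a path in a bipartite graph) must alternate between $A$ and $B$ and hence has length at least $2t - 1 \geq (2 - 6c^{-1/2})k$ once $k$ is large.

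First I would set up the edge-query sequence. By property~\eqref{DFS1}, every positively answered query grows $R \cup S$ by one, so after $R \cup S$ reaches size $2t$ we have made at most $2k$ queries total with at least $2t$ of them positive (the precise bookkeeping: $|R \cup S| = $ number of positive queries so far). I would run the algorithm until either $|S|$ first reaches $2t$ — giving the desired long path and we are done — or until $|R \cup S|$ first reaches $2k$ while $|S|$ has stayed below $2t$ throughout. In the latter (bad) case, I want to derive a contradiction with high probability. The key observation is property~\eqref{DFS3}: at every round all edges between $R$ and $U$ have been queried and answered negatively. If $|S| < 2t$ always and $|R \cup S| \geq 2k$ at the end, then in particular, consider the round where $|R \cup S|$ is around $k$: the smaller of $|R|, |S|$ forces, via the minimum-degree and bipartiteness structure, that there are many $R$–$U$ edges of $G$, all queried negatively.

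The quantitative heart is a first-moment / Chernoff estimate. I would argue that along the whole run, the total number of queries is at most $2k - 1$ (one per round that is not a "start new component" step, and there are at most $|A| + |B|$ rounds of the relevant type — actually I'd bound queries by counting: $2|V|$ rounds, each either moves a vertex $U\to S$, $S \to R$, or starts a component, and positive queries $\le 2t$ only in the good-avoided case so I need the total query count controlled differently). More carefully: condition on reaching $|R\cup S| = 2k$ with $|S|<2t$ throughout; then at the round when $|R\cup S|$ first hits value $k$, either $|R| \geq k - 2t$ is large or, since $|S| < 2t$, we have $|R| \geq k - 2t$, so $|U| \geq |A|+|B| - k$. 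Using $\delta(G)\geq k$ and bipartiteness one shows the number of $G$-edges between $R$ and $U$ is at least $\Omega(k^2)$ — for instance each vertex of $R$ has $\geq k$ neighbors, at most $2t + |R|$ of which lie in $R\cup S$ within its own side's complement, leaving $\gg k$ in $U$ once $c$ is large since $2t+2k \ll$ ... hmm, this needs $|R|$ small, so instead I'd take the round where $|R\cup S| = 2t$ exactly: then $|R| \geq 2t - |S| > 0$ and in fact by choosing the first such round $|R|$ is close to $0$, which is the wrong direction. The right move (as in KLS) is: if $|S|$ never reaches $2t$, then since $R$ only grows, consider the \emph{last} round, when $|R| = |A|+|B|$; walk back to the first round with $|R| \geq k$; at that round $|U| = |A|+|B|-|R\cup S| \geq |A|+|B| - |R| - 2t$ and one shows $|R|$-to-$|U|$ edge count is $\geq \frac{1}{3} k^2$ (say), using that both $R$ and $U$ meet both sides substantially. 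All these edges were queried and returned negative, an event of probability $(1-p)^{k^2/3} = o(1)$ by Chernoff / the trivial bound, since $pk^2/3 = ck/3 \to \infty$. This contradiction shows the good case occurs a.a.s.

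I expect the main obstacle to be the combinatorial geometry of the bipartite partition: making precise that at the critical round both $R$ and $U$ have a linear-in-$k$ presence on \emph{both} sides $A$ and $B$, so that the $R$–$U$ edge count is genuinely $\Omega(k^2)$ rather than possibly zero (which could happen if, say, $R$ were entirely inside $A$ and $U$ entirely inside $B$ — but then those $R$–$U$ pairs are exactly the bipartite edges and min-degree $k$ saves us, while the mixed case needs a short case analysis on $|R \cap A|, |R\cap B|$). Once that lower bound on untested-but-negative $R$–$U$ edges is in hand, the probabilistic step is a one-line application of the bound $(1-p)^m \le e^{-pm}$ together with $pm = \Omega(ck) \to \infty$, and the choice $t = (1 - 3c^{-1/2})k$ is calibrated so the slack $3c^{-1/2}k$ both absorbs lower-order terms and keeps $m = \Omega(k^2)$; I would fix the constants at the end.
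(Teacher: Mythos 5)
The paper does not prove this lemma itself (it is quoted from~\cite{KLS15}), so I am judging your proposal against the standard DFS argument, the same one the paper adapts for Lemma~\ref{lem: logk-starting-path}. Your overall plan is right: run the DFS, use that $S$ spans a path which, by bipartiteness, alternates between the sides $V_1,V_2$, and derive a contradiction from property~(\ref{DFS3}) plus a Chernoff bound if $|S|$ never gets close to $2k$. But the step you yourself flag as the ``main obstacle'' is exactly the missing heart of the proof, and your proposed stopping rules do not deliver it. If you stop at the first round with $|R\cup S|=2k$, or at the first round with $|R|\ge k$, you cannot conclude that $\Omega(k^2)$ edges of $G$ run between $R$ and $U$: a vertex $v\in R\cap V_1$ is only guaranteed $k$ neighbours, all in $V_2$, and at your stopping moment $(R\cup S)\cap V_2$ may already contain more than $k$ vertices (its size is only bounded by $|R\cup S|$, which is about $2k$, resp.\ $k+2t$), so every guaranteed neighbour of $v$ may already be explored and contribute nothing. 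Concretely, for $G=K_{k,N}$ with $N$ huge it is consistent with all your constraints that at the moment $|R|=k$ the set $R$ lies entirely in the large side while the whole $k$-vertex small side sits in $R\cup S$; then no $R$--$U$ edges are forced at all. The assertion that ``both $R$ and $U$ meet both sides substantially'' is precisely what is not proved, and no amount of constant-tuning at the end repairs it.

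The fix (this is how Krivelevich, Lee and Sudakov argue, in parallel with the proof of Lemma~\ref{lem: logk-starting-path}) is to do the bookkeeping per side: stop at the \emph{first} moment at which $|(R\cup S)\cap V_i|\ge (1-\epsilon)k$ for some side $V_i$, with $\epsilon$ of order $c^{-1/2}$; such a moment exists since $|V_1|,|V_2|\ge k$, and since $R\cup S$ grows one vertex per round, at that moment the opposite side also satisfies $|(R\cup S)\cap V_{3-i}|\le (1-\epsilon)k$. Hence \emph{every} vertex of $R$, on either side, has at least $\epsilon k$ neighbours in $U$, all queried negatively by~(\ref{DFS3}). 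If $|R|\ge C\epsilon k$, this forces at least $C\epsilon^2k^2$ negative answers while the number of positive answers is at most $|R\cup S|\le 2k$, so among the first $C\epsilon^2k^2$ queries at most $2k$ are positive; since the answers are i.i.d.\ Bernoulli$(p)$ this is a Chernoff event for $\bin(C\epsilon^2k^2,p)$ of probability $o(1)$ once the constants are set so that $C\epsilon^2 ck$ comfortably exceeds $2k$. (This is also the correct way to phrase your probability step: the set of $R$--$U$ edges is process-dependent, so the bound ``$(1-p)^{k^2/3}$ for these particular edges'' is not legitimate as written; one bounds the number of positive answers among the first so-many queries, exactly as event $\cB$ in Lemma~\ref{lem: logk-starting-path}.) It follows that a.a.s.\ $|R|<C\epsilon k$ at the stopping moment, so $|S\cap V_i|\ge (1-(C+1)\epsilon)k$, and alternation of the path spanned by $S$ gives $|S|\ge 2(1-(C+1)\epsilon)k-1$, i.e.\ a path of length $(2-O(c^{-1/2}))k$, with the constant $6$ coming out of the precise choices in~\cite{KLS15}. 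Without this side-by-side stopping rule your argument does not close.
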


The next lemma is of a similar flavor as the last one.
We suitably modify a result of~\cite{KLS15} for our purposes. 

\begin{lemma} \label{lem: logk-starting-path}
Let $p=\frac{c}{k}$ for $c$ sufficiently large, and let $G$ be a graph of minimum degree at least $k$. If $V_0\subseteq V(G)$ with $|V_0|\geq \log k$, then $G_p$ $a.a.s.$ contains a path of length $\left(1-2c^{-1/2}\right)k$ which starts at a vertex in $V_0$.
\end{lemma}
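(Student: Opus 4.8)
The idea is to run the DFS-algorithm on $G_p$ but to force it to begin its exploration inside $V_0$, so that the first tree $T$ produced is rooted at a vertex of $V_0$; property~\eqref{DFS2} then tells us that the stack $S$ always spans a path, and this path always starts at the root, hence at a vertex of $V_0$. Concretely, I would modify the rule ``if $S=\es$, move some unreached vertex of $U$ to $S$'' so that as long as $S=\es$ and $V_0 \cap U \neq \es$, the new root is chosen from $V_0$. The goal is then to show that at some round during the processing of the first component the stack has size at least $(1-2c^{-1/2})k$.

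The main step is an expansion/counting argument on the first $T$ steps of DFS, exactly in the spirit of the proof of Lemma~\ref{lem: bippath} in~\cite{KLS15}. Fix a target length $\ell = (1-2c^{-1/2})k$ and suppose for contradiction that the stack never reaches size $\ell$ while we are exploring the component(s) rooted in $V_0$. Run the algorithm until either $R\cup S$ first has size, say, $m := \lceil 2\ell \rceil$ (or some comparable threshold), or $V_0\cap U=\es$, whichever comes first. Throughout this phase $|S|<\ell$, so whenever we start a new component its root lies in $V_0$ (as long as $V_0$ is not exhausted); but each such component, when completed, moves at least its root and — since vertices have degree $\geq k$ in $G$ — we need to control how many vertices can be dumped into $R$ with a small stack. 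The key quantitative input is property~\eqref{DFS3}: when $|R|=r$ and $|S|=s$ with $s<\ell$, \emph{every} edge of $G$ between $R$ and $U$ has been tested and answered negatively. A vertex that enters $R$ has had all its (at least $k$) $G$-edges to $U$ tested negatively at the moment it was moved to $R$; summing, after $t$ positive answers we have tested at least $(|R|)\cdot(\text{something like }k - |R\cup S|)$ edges negatively, hence at least $\Omega(rk)$ tested edges once $|R\cup S| \le k/2$. Each tested edge is in $G_p$ independently with probability $p=c/k$, so the number of positive answers among the first $N$ tested edges stochastically dominates $\bin(N,p)$; by Chernoff (Theorem~\ref{thm: cher}) we a.a.s.\ get at least $pN/2$ positive answers. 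Each positive answer increases $|R\cup S|$ by one (property~\eqref{DFS1}), so after the phase $|R\cup S| \ge \tfrac12 p \cdot (\text{number of tested edges})$. Balancing these two estimates forces $|R\cup S|$, and then $|S|$, to grow past $\ell$, yielding the contradiction. One has to be slightly careful that the stack, not just $R\cup S$, is large: this is where the factor $1-2c^{-1/2}$ (rather than something closer to $1$) comes from — if $|R\cup S| \ge (1-c^{-1/2})k$ but $|S|<2c^{-1/2}k$ throughout, then $|R|$ is large, and the negative-query count from $R$ to $U$ becomes large enough to have produced more than $|R\cup S|$ positive answers, a contradiction. The role of $|V_0|\ge \log k$ is only to ensure there is enough ``fuel'': if the first few components rooted in $V_0$ happen to be small (each contributing $O(1/p)=O(k/c)$ vertices before stalling), we still have $\log k$ attempts, and a.a.s.\ at least one of the early components is large — more simply, the edge-counting bound is applied to the union of all these components and the arithmetic goes through as long as we have not run out of $V_0$-roots before reaching size $\ell$, which $|V_0|\ge \log k$ guarantees since each exhausted small component removes only one vertex from $V_0$.

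The technical heart — and the step I expect to be the main obstacle — is making the two-sided bookkeeping precise: simultaneously lower-bounding the number of negatively-answered $R$-to-$U$ queries (to feed into Chernoff from below on positive answers) and keeping track that a bounded stack size caps the growth, so that the only consistent outcome is $|S|\ge \ell$ at some round. This is a careful but standard DFS computation; the adaptation from~\cite{KLS15} is routine except for the modified root-selection rule and verifying it does not disturb the independence of the edge-queries (it does not, since the choice of root uses only information about $G$ and about which vertices are in $U$, never about untested edges of $G_p$).

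Once the stack reaches size $\ell$ at some round, property~\eqref{DFS2} gives a path on $\ell$ vertices in $G_p$, i.e.\ of length $\ell-1 = (1-2c^{-1/2})k - 1 \ge (1-2c^{-1/2})k$ for the relevant asymptotics (absorbing the $-1$ into constants, or adjusting $\ell$), and by construction its endpoint at the bottom of the stack is the root, which lies in $V_0$. This completes the proof.
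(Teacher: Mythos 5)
Your overall strategy is the same as the paper's: force the DFS restarts to pick roots in $V_0$, stop when $|R\cup S|$ reaches a threshold of order $(1-\epsilon)k$ with $\epsilon=c^{-1/2}$, and use property~(\ref{DFS3}) together with the minimum degree to convert a small stack into at least $k/p$ negatively answered queries, contradicting (via Chernoff) the fact that at most $|R\cup S|$ queries were answered positively. The genuine gap is in the one place where the hypothesis $|V_0|\geq\log k$ must interact with the probability estimates: you have to guarantee that the component being explored at the stopping time is rooted in $V_0$. Your bookkeeping for this is incorrect. It is not true that ``each exhausted small component removes only one vertex from $V_0$'' -- a component may swallow many vertices of $V_0$, and the single growing component will typically absorb all of $V_0$ long before $|R\cup S|$ reaches $(1-\epsilon)k$ -- so $|V_0|\geq\log k$ does not guarantee that you never run out of $V_0$-roots; and ``a.a.s.\ at least one of the early components is large'' is neither justified nor the statement you need. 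What is needed (and what the paper proves) is that a.a.s.\ the stack never empties after roughly $\tfrac12\log k$ steps: for each $i$, completing a component when $|R|=i$ forces at least $i\epsilon k$ tested edges with at most $i$ positive answers, an event of probability at most $2e^{-ic^{1/2}/12}\leq 2^{-i}$ by Chernoff, and the union bound over all $i\geq\tfrac12\log k$ is $o(1)$. Since a restart with a root outside $V_0$ can only occur after all of $V_0$, hence at least $\log k$ vertices, have been explored, this estimate is exactly what ensures the final root lies in $V_0$. Without it, your argument does not exclude that after $V_0$ is exhausted the stack empties and the long path you eventually extract starts outside $V_0$.

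Two quantitative points also need repair. Running until $|R\cup S|=2\ell\approx 2k$ destroys the minimum-degree leverage, since a vertex of $R$ is only guaranteed $k-|R\cup S|$ neighbours in $U$, which is vacuous once $|R\cup S|\geq k$; the threshold must stay at $(1-\epsilon)k$. And your case split ``either $|S|\geq\ell$ or $|S|<2c^{-1/2}k$'' leaves the middle range uncovered: the contradiction must be derived from the assumption $|S|<(1-2\epsilon)k$, which still gives $|R|>\epsilon k$ and hence at least $\epsilon^2k^2=k/p$ negative queries; moreover you need the sharp form of Chernoff (more than $(1-\epsilon)k$ positive answers among the first $k/p$ tests, not merely $pN/2$) to obtain the stated constant $1-2c^{-1/2}$ rather than a weaker one.
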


\begin{proof}
Let $\epsilon=c^{-1/2}$. 
Let $V_0\subseteq V(G)$, and we may assume that $|V_0|=\left\lceil\log k\right\rceil$. 
We modify the DFS-algorithm as follows. 

Recall that the stack $S$ denotes the vertices that are currently under exploration. 
If $S=\es$ in some step of the algorithm, 
then as long as possible we take a vertex of $V_0\cap U$ as the new root of a component and put it onto the stack $S$. 
Hence, by this modified DFS-algorithm, 
at least up to the point when we explored at most $\log k$ vertices,
the root of the current component is in $V_0$.

We run this modified DFS-algorithm until the moment at which we reach $|R\cup S|=(1-\epsilon)k$. 
Let $\cA$ be the event that $S=\emptyset$ at some moment after $\frac{1}{2}\log k$ steps of the algorithm and 
let $\cB$ be the event that there are less than $(1-\epsilon)k$ positive answers among the first $\frac{k}{p}=\epsilon^2k^2$ tested edges. 

\begin{claim}\label{cla: all fine}
 $\bP[\cA\cup \cB]=o(1).$
\end{claim}

Assuming this claim we can a.a.s.\ find a path of length $(1-\epsilon)k$ starting in a vertex of $V_0$ as follows.

Suppose neither $\cA$ nor $\cB$ holds.
Consider the step of the DFS-algorithm at which we reach $|R\cup S|=(1-\epsilon)k$. 
Thus the root of the current component is contained in $V_0$, as $\cA$ does not hold.
Due to property (\ref{DFS1}) such a step exists. 
Recall that the vertices in $S$ form a path (property (\ref{DFS2})). 
If $|S|\geq(1-2\epsilon)k$, then the statement of the lemma follows directly. 
Thus, we may assume that 
\begin{align}\label{S small}
	|S|<(1-2\epsilon)k
\end{align}
which implies	$|R|>\epsilon k$.
Moreover, each vertex in $R$ has at least 
$k-|R\cup S|\geq \epsilon k$ neighbors in $G$ in the set of unreached vertices $U$. 
Due to property (\ref{DFS3}), all these edges between $R$ and $U$ have been queried and answered negatively. 
Hence at least $|R|\cdot \epsilon k>\epsilon^2 k^2$ queries are answered negatively
and less than $(1-\epsilon)k$ are answered positively.
Thus $\cB$ holds, which is a contradiction.

We complete the proof of the lemma by the proof of claim.
For a positive integer $i$, 
let $\cA_i$ be the event that we complete exploring a component when $|R|=i$.
Since every vertex has degree at least $k$,
in this moment of the algorithm every vertex in $R$ has at least $k-i\geq \epsilon k$ neighbors in $U$ (for $i\leq (1- \epsilon) k$) and all these edges are queried negatively. 
Thus we queried at least $i \epsilon k$ edges in total, and had at most $i$ positive answers.
The probability that this occurs is at most the probability that 
a binomial distributed random variable $X_i$ with $X_i\sim\bin( i\epsilon k, p)$
is at most $i$.
Hence $\bE X_i = i\epsilon c=ic^{1/2}$.
By Chernoff's inequality, we obtain
\begin{align*}
	\bP[\cA_i]
	\leq\bP[X_i\leq i]
	\leq	\bP\left[\Big|X_i- ic^{1/2}\Big|\geq \frac{ic^{1/2}}{2}\right]
	\leq
	2e^{-\frac{ic^{1/2}}{12}}
	\leq \frac{1}{2^i}.
\end{align*}
Using the union bound leads to the desired result
\begin{align*}
	\bP[\cA]
	\leq\bP\left[\bigcup\limits_{i=\frac{1}{2}\ln k}^{(1-\epsilon)k}\cA_i\right]
	\leq
	\sum_{i=\frac{1}{2}\ln k}^{(1-\epsilon)k}
	\bP[\cA_i]
	\leq
	\sum_{i=\frac{1}{2}\ln k}^{(1-\epsilon)k}\frac{1}{2^i}
	=o(1).
\end{align*}
An upper bound for the event $\cB$ follows by a direct applications of Chernoff's inequality.
Let $Y$ be a binomial distributed random variable with $Y\sim\bin\left(\frac{k}{p},p\right)$. 
Then,
\begin{align*}
	\bP[\cB]
	\leq\bP[Y\leq (1-\epsilon)k]
	\leq 
	2\exp\left(-\frac{\epsilon^2 k}{3}\right)=o(1).
\end{align*}
This implies $\bP[\cA\cup \cB]=o(1)$, 
which completes the proof of the claim and thus the proof of the lemma.
\end{proof}

\section{Long Cycles}
In this section we prove Theorem~\ref{thm: cycle}. 
Let $G$ be a graph of minimum degree at least $k$ on $n$ vertices and let $p=\frac{c}{k}$ for $c$ sufficiently large. 

This proof is based on ideas of Riordan~\cite{Rio14} and follows its strategy.
In particular, the first two short lemmas naturally transfer to our setting.

In this section, we consider a rooted forest $T$ which is an output of the DFS-algorithm described in the beginning.
We emphasize that every untested edge of $G$ is in $G_p$ independently of $T$.

\begin{lemma}\label{lem: tested edges}
During the DFS-algorithm on $G_p$ a.a.s.\ at most $\frac{2n}{p}=\frac{2nk}{c}$ many edges are tested.
\end{lemma}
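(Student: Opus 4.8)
The plan is to bound the number of tested edges by analyzing how many queries receive a positive answer versus a negative answer. By property~(\ref{DFS1}), every positively answered query increases $|R\cup S|$ by exactly one, and since $|R\cup S|$ never exceeds $n$, there are at most $n$ positively answered queries in the entire run of the DFS-algorithm. The remaining tested edges are exactly those whose query was answered negatively, so it suffices to show that a.a.s.\ at most $\frac{2n}{p} - n \le \frac{2n}{p}$ queries receive a negative answer; in fact it is cleaner to show directly that a.a.s.\ the total number of tested edges is at most $\frac{2n}{p}$.

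First I would observe that the outcomes of the queries are i.i.d.\ Bernoulli$(p)$ random variables, independent of everything the algorithm has done so far (as emphasised in the Preliminaries). So I can couple the run of the algorithm with an infinite sequence $b_1, b_2, \ldots$ of i.i.d.\ Bernoulli$(p)$ bits: the $j$-th tested edge is present in $G_p$ iff $b_j = 1$. Suppose, for contradiction, that more than $\frac{2n}{p}$ edges are tested. Then in particular the first $m := \lceil \frac{2n}{p}\rceil$ of these bits are all defined, and among them the number of $1$'s is at most $n$ (the bound on positive answers above). Let $Y \sim \bin(m, p)$ be the number of $1$'s among $b_1, \ldots, b_m$; then $\bE Y = mp \ge 2n$, and the event above forces $Y \le n$.

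The final step is a direct application of Chernoff's inequality (Theorem~\ref{thm: cher}) with $\lambda = n \le \bE Y$: we get
\begin{align*}
	\bP\left[\text{more than } \tfrac{2n}{p} \text{ edges are tested}\right]
	\le \bP[Y \le n]
	\le \bP\left[|Y - mp| \ge n\right]
	\le 2 e^{-\frac{n^2}{3mp}}
	\le 2 e^{-\frac{n^2}{3 \cdot 3n}}
	= 2 e^{-n/9},
\end{align*}
where I used $mp \le 3n$ for $k$ large (since $m = \lceil 2n/p \rceil \le 2n/p + 1$ and $p = c/k$, so $mp \le 2n + p \le 3n$). This tends to $0$ as $n \to \infty$; since $n \ge k \to \infty$, this is $o(1)$, which proves the lemma. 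The only mild subtlety — not really an obstacle — is making the coupling argument precise, i.e.\ that "testing more than $\frac{2n}{p}$ edges" is a well-defined event on the same probability space as the bit sequence and that it entails $Y \le n$; this follows immediately from property~(\ref{DFS1}) and the fact that the query outcomes do not depend on previous answers.
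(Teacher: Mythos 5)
Your proof is correct and takes essentially the same approach as the paper's: bound the number of positively answered queries by $n$ (the paper counts the at most $n-1$ forest edges, you use property (\ref{DFS1}), which amounts to the same thing), and then apply Chernoff's inequality to the number of successes among the first roughly $\tfrac{2n}{p}$ queries. Your explicit coupling with an i.i.d.\ Bernoulli sequence merely makes rigorous a step the paper states informally, so nothing further is needed.
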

\begin{proof} 
We run the DFS-algorithm on $G_p$. 
Note that the rooted spanning forest $T$ of $G_p$ has at most $n-1$ edges and that every positively answered query contributes an edge to our exploration of this forest. 
Let $X$ be the number of tested edges. 
If at least $\frac{2n}{p}$ many edges are tested, 
then let $Y$ be the number of positively answered queries of the first $\frac{2n}{p}$ tested edges. 
Thus, $Y$ is a binomial distributed random variable with $Y\sim \bin\left(\frac{2n}{p},p\right)$ 
and $\bE Y=\frac{2n}{p}\cdot p=2n$. 
By Chernoff's inequality, we obtain
\begin{align*}
	\bP\left[X>\frac{2n}{p}\right] \leq
	\bP\left[Y<n\right]\leq \bP\big[\vert Y-2n\vert\geq n\big]\leq 2e^{-\frac{n}{6}}=o(1).
\end{align*}
This completes the proof.
\end{proof}

\bigskip
\noindent
From now on, let $\epsilon=c^{-1/5}$. 
Let $E_u$ be the set of untested edges of $G$ during the DFS-algorithm. 
We call a vertex \emph{free} if it is incident with at least $(1-\epsilon)k$ untested edges in $E_u$.

\begin{lemma}\label{lem: all free}
At most $4\epsilon^4n$ vertices of the rooted forest $T$ are a.a.a.\ not free.
\end{lemma}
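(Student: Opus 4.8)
The plan is to bound the number of non-free vertices by controlling, for each vertex $v$ of $G$, the number of edges at $v$ that get tested during the DFS-run on $G_p$. A vertex $v$ is non-free precisely when more than $\epsilon k$ of its incident edges are tested. By Lemma~\ref{lem: tested edges}, a.a.s.\ the total number of tested edges is at most $\frac{2nk}{c}$, so the sum over all vertices of the number of tested incident edges is a.a.s.\ at most $\frac{4nk}{c}$ (each tested edge contributes to two vertices). If we had a uniform bound of this type, Markov's inequality would already give that the number of vertices with more than $\epsilon k = c^{-1/5}k$ tested incident edges is at most $\frac{4nk/c}{c^{-1/5}k} = 4c^{-4/5}n = 4\epsilon^4 n$, which is exactly the claimed bound. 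So the whole lemma reduces to the deterministic counting inequality plus the a.a.s.\ bound from Lemma~\ref{lem: tested edges}.

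More carefully, I would argue as follows. Condition on the a.a.s.\ event $\mathcal{E}$ from Lemma~\ref{lem: tested edges} that at most $\frac{2nk}{c}$ edges of $G$ are tested. On $\mathcal{E}$, let $t(v)$ denote the number of tested edges incident with $v$; then $\sum_{v\in V(G)} t(v) \le \frac{4nk}{c}$ since every tested edge is counted twice. A vertex $v$ is \emph{not} free exactly when it is incident with fewer than $(1-\epsilon)k$ untested edges, i.e.\ when $t(v) > \epsilon k$ (using $\deg_G(v)\ge k$; note an edge at $v$ is either tested or untested, so the number of untested edges at $v$ is at least $k - t(v)$, which is $\ge (1-\epsilon)k$ as soon as $t(v)\le \epsilon k$). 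Hence
\begin{align*}
    \#\{v : v \text{ not free}\} \;\le\; \frac{\sum_{v} t(v)}{\epsilon k} \;\le\; \frac{4nk/c}{\epsilon k} \;=\; \frac{4n}{c\epsilon} \;=\; 4\epsilon^4 n,
\end{align*}
using $\epsilon = c^{-1/5}$, so that $c\epsilon = c^{4/5} = \epsilon^{-4}$. Since $\mathcal{E}$ holds a.a.s., the bound on the number of non-free vertices holds a.a.s.

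I do not expect a serious obstacle here: the only subtle point is making sure the dichotomy "tested or untested" is used correctly (an edge at $v$ present in $G$ but not queried is untested and counts toward freeness, while edges not in $G$ at all are irrelevant since we only need $\deg_G(v)\ge k$), and that the identification $\epsilon k$ threshold $\Leftrightarrow$ non-free is the right one. The arithmetic $\frac{4}{c\epsilon} = 4\epsilon^4$ should be double-checked, but with $\epsilon = c^{-1/5}$ it is immediate. Everything else is just Lemma~\ref{lem: tested edges} and Markov's inequality, so the proof is short.
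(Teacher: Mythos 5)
Your proposal is correct and is essentially the paper's own argument: both bound the number of non-free vertices by double-counting tested edge--vertex incidences (each non-free vertex has more than $\epsilon k$ tested incident edges since $\deg_G(v)\ge k$) and then invoke Lemma~\ref{lem: tested edges}; the paper phrases it as a contradiction while you phrase it as a Markov/averaging bound, which is the same computation. The only cosmetic slip is your ``exactly when'' (non-freeness only \emph{implies} $t(v)>\epsilon k$ when $\deg_G(v)>k$), but that is the direction the bound needs, so nothing is affected.
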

\begin{proof}
Let $v\in V(T)$ be a vertex that is not free. 
Since the minimum degree of $G$ is at least $k$, the vertex $v$ is incident with at least $\epsilon k$ tested edges. 
Assume that there are more than $4\epsilon^4n$ vertices that are not free. 
Hence, we have more than $\frac{1}{2}4\epsilon^4n \cdot\epsilon k=\frac{2nk}{c}$ many tested edges in total. 
By Lemma~\ref{lem: tested edges}, the probability of this is $o(1)$, which implies the statement.
\end{proof}

For a rooted forest $T$ and a vertex $v\in V(T)$, we introduce the following notation.
\begin{enumerate}[(i)]
	\item Let $A(v)$ be the set of ancestors of $v$ in $T$ excluding $v$ and let $D(v)$ be the set of descendants of $v$ in $T$ excluding $v$.
	\item Let $A_i(v)$ and $D_i(v)$ be the sets of ancestors and descendants of $v$ at distance exactly $i$, respectively, and let $A_{\leq i}(v)$ and $D_{\leq i}(v)$ be the sets of ancestors and descendants of $v$ at distance at most $i$.
	\item The height of the vertex $v$ is defined as $\max\{i:D_i(v)\neq\emptyset\}$.
	\item For two vertices $u,v$,
	let $d(u,v)$ be the number of edges on a shortest $u,v$-path in $T$.
	\item We say a vertex $v$ is \emph{up} if it has many descendants, say if $|D(v)|\geq \epsilon k$.
	If this is not the case, then $v$ is \emph{down}.
	\item\label{heavy} We call the vertex $v$ \emph{skinny} if $|D_{\leq(1-5\epsilon)k}(v)|\leq (1-4\epsilon)k$. 
	Let $Y$ denote the set of vertices in $T$ that are not skinny.
\end{enumerate}

\begin{lemma}\label{lem: heighthk}
If the rooted forest $T$ of $G_p$ contains at most $5\epsilon^4n$ down vertices, 
then, for any constant $h\geq 1$, 
at most $6h\epsilon^3n$  vertices of $T$ are at height less than $hk$.
\end{lemma}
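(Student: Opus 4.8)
The plan is to split the vertices of height less than $hk$ according to whether they are up or down, bound each part separately, and add the two bounds. The down ones are immediately under control: by hypothesis there are at most $5\epsilon^4 n$ down vertices in total, and for $c$ (hence $k$) sufficiently large $5\epsilon^4 n = 5\epsilon\cdot\epsilon^3 n \le h\epsilon^3 n$. So the real task is to bound the number of vertices that are simultaneously up and of height less than $hk$; call such a vertex \emph{low}.

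The key device is to introduce what one might call a \emph{terminal} vertex: an up vertex $v$ such that every child of $v$ in $T$ is down. First I would observe that for a terminal vertex $v$ the whole set $D(v)$ consists of down vertices — any $w\in D(v)$ is a descendant of some child $u$ of $v$, whence $D(w)\subseteq D(u)$ and so $|D(w)|\le|D(u)|<\epsilon k$ — while $|D(v)|\ge\epsilon k$ since $v$ itself is up. Next, the sets $D(v)$ over terminal $v$ are pairwise disjoint: if $D(v_1)\cap D(v_2)\neq\es$ with $v_1\neq v_2$, then one of them lies in the other's descendant set, contradicting the fact that a descendant set of a terminal vertex contains only down vertices. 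Consequently the number of terminal vertices is at most $5\epsilon^4 n/(\epsilon k)=5\epsilon^3 n/k$.

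Now I would assign to each low vertex $v$ a terminal vertex $\phi(v)$: starting at $v$, repeatedly descend to an up child chosen by some fixed rule, stopping at the first vertex having no up child; this stopping vertex is terminal and lies in $\{v\}\cup D(v)$. Since $v$ is up it has a descendant, so $\mathrm{height}(v)\ge 1$, and $\phi(v)$ is $v$ or a proper descendant of $v$ at distance at most $\mathrm{height}(v)-1<hk$ from $v$. For a fixed terminal vertex $v'$, every $v\in\phi^{-1}(v')$ is $v'$ itself or an ancestor of $v'$, at distance less than $hk$ from $v'$; as there is at most one ancestor at each given distance, $|\phi^{-1}(v')|\le hk$. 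Hence the number of low vertices is at most $hk\cdot 5\epsilon^3 n/k = 5h\epsilon^3 n$, and adding the bound for the down ones yields at most $5h\epsilon^3 n + h\epsilon^3 n = 6h\epsilon^3 n$ vertices of height less than $hk$.

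The step I expect to be the crux — and the place where a naive attempt fails — is choosing the right set of vertices to charge to. The obvious down vertices are the leaves, but a single leaf can be the bottom endpoint of $hk$ many low vertices while there may be $\Theta(\epsilon^4 n)$ leaves, which loses a factor of order $hk$ and kills the bound. Terminal vertices fix exactly this: there are only $O(\epsilon^3 n/k)$ of them because each one privately owns a block of at least $\epsilon k$ down vertices, yet every low vertex still sits above one at distance less than $hk$, so the charging multiplicity is only $hk$ and the two factors of $k$ cancel. Verifying the disjointness of the blocks $D(v)$ and the existence and distance bound for $\phi(v)$ are the technical points, but they are straightforward once the notion of terminal vertex is in place.
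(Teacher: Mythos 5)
Your proof is correct, but it runs along a genuinely different track from the paper's. The paper's argument is a global double count: for every up vertex $v$ it selects the $\epsilon k$ deepest descendants $P(v)$ (all of which are down), forms the pair set $\mathcal{S}_1=\{(v,w): v \text{ up},\, w\in P(v)\}$ of size at least $(1-5\epsilon^4)\epsilon kn$, discards the at most $hk\cdot 5\epsilon^4 n$ pairs with $d(v,w)\le hk$ (using that a down vertex has at most $hk$ ancestors within distance $hk$), and divides the remainder by $\epsilon k$ to conclude that at least $(1-6h\epsilon^3)n$ up vertices have a descendant, hence height, beyond $hk$. You instead isolate the ``terminal'' up vertices, show their descendant sets are pairwise disjoint blocks of at least $\epsilon k$ down vertices (so there are at most $5\epsilon^3 n/k$ terminals), and charge every up vertex of height less than $hk$ to a terminal descendant within distance less than $hk$, with multiplicity at most $hk$ by the one-ancestor-per-distance fact; down vertices are absorbed separately. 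Both proofs rest on the same two facts (an up vertex controls at least $\epsilon k$ descendants; a vertex has at most one ancestor at each distance) and yield the same constant $6h\epsilon^3 n$, but your packing-plus-charging argument needs the disjointness observation the paper never uses, and in return it makes the $k$-cancellation and the role of the scarce down vertices more transparent, while the paper's pair count avoids introducing any auxiliary structure ($\phi$, terminal vertices) and is a line or two shorter. All the technical steps you flag (disjointness of the blocks $D(v)$, termination and distance bound for $\phi$, and the estimate $5\epsilon^4 n\le h\epsilon^3 n$ for $c$ large) check out.
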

\begin{proof}
For each up vertex $v\in V(T)$, 
let $P(v)$ be a set of $\epsilon k$ descendants of $v$, 
obtained by choosing vertices of $D(v)$ one-by-one starting with those with largest distance to $v$ in $T$. 
For every $w\in P(v)$, we have $|D(w)|<|P(v)|=\epsilon k$, 
because $D(w)\subsetneq P(v)$. 
This implies that every vertex $w\in P(v)$ is down.
 
We define the set $\cS_1=\{ (v,w) : v\text{ is up and }w\in P(v)\}$. 
Each up vertex $v$ appears in exactly $\epsilon k$ pairs $(v,w)\in \mathcal{S}_1$ and by the assumption of the lemma, 
we have at least $(1-5\epsilon^4)n$ up vertices. 
Hence, we obtain
\begin{align*}
|\mathcal{S}_1|\geq \left(1-5\epsilon^4\right)\epsilon kn.
\end{align*}
We consider the pairs $(v,w)\in \mathcal{S}_1$ that satisfy $d(v,w)\leq hk$. 
For pairs $(v,w)\in \mathcal{S}_1$, 
we conclude that $v\in A(w)$ and $w$ is down. 
Note that each vertex has at most one ancestor at each distance, hence $|A_{\leq hk}(w)| \leq hk$. 
Since we have at most $5\epsilon^4n$ down vertices, 
this implies that there are at most $hk\cdot5\epsilon^4n$ pairs $(v,w)\in \mathcal{S}_1$ satisfying $d(v,w)\leq hk$. 
Hence, if we consider the set \mbox{$\mathcal{S}_1^{'}=\left\{ (v,w) \in \mathcal{S}_1: d(v,w)>hk \right\}$}, then
\begin{align*}
	|\mathcal{S}_1^{'}|&\geq |\mathcal{S}_1| -5h\epsilon^4kn\\
	&\geq \left(1-5\epsilon^4\right)\epsilon kn-5h\epsilon^4kn\\
	&\geq \left(1-6h\epsilon^3\right)\epsilon kn.
\end{align*}
Recall that each up vertex $v$ appears in exactly $\epsilon k$ pairs $(v,w)\in \mathcal{S}_1$, and since $\mathcal{S}_1^{'}\subset\mathcal{S}_1$, each such $v$ appears also in at most $\epsilon k$ pairs $(v,w)\in\mathcal{S}_1^{'}$. 
Hence, at least 
\begin{align*}
	\frac{\left(1-6h\epsilon^3\right)\epsilon kn}{\epsilon k}=
	\left(1-6h\epsilon^3\right)n
\end{align*}
distinct up vertices $v$ appear in pairs $(v,w)\in\mathcal{S}_1^{'}$. 
By the definition of $\mathcal{S}_1'$, each such vertex $v$ is at height at least $hk$, which completes the proof. 
\end{proof}

\begin{lemma}\label{lem: longpath}
If the rooted forest $T$ of $G_p$ contains at most $5\epsilon^4n$ down vertices 
and $X\subseteq V(T)$ such that $\vert X\vert\leq 5\epsilon^4n$,
then, for $c$ sufficiently large, $T$ contains a vertical path $P$ of length at least $4k$ containing at most $\frac{1}{4}\epsilon k$ vertices in $X\cup Y$.
\end{lemma}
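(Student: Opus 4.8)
The plan is to first show that $Y$ is small, namely $|Y|=O(\epsilon^3n)$, and then to extract $P$ by a short averaging argument over descending paths. For the first part, note that every leaf of $T$ has no descendants, hence is down, so $T$ has at most $5\epsilon^4n$ leaves, and therefore at most $5\epsilon^4n$ branch vertices (in each component $\#\{\text{branch vertices}\}\le\#\{\text{leaves}\}$). Now take $v\in Y$, so $|D_{\le(1-5\epsilon)k}(v)|>(1-4\epsilon)k$. Vertices of $Y$ of height less than $(1-5\epsilon)k$ number at most $6\epsilon^3n$ by Lemma~\ref{lem: heighthk} (with $h=1$), so assume $v$ has height at least $(1-5\epsilon)k$. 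Descend from $v$ along single children until the first branch vertex or leaf $w$ is reached (possibly $w=v$). If $d(v,w)>(1-5\epsilon)k$ or $w$ is a leaf, then $D_{\le(1-5\epsilon)k}(v)$ lies on a single descending path, so $|D_{\le(1-5\epsilon)k}(v)|\le(1-5\epsilon)k<(1-4\epsilon)k$, a contradiction. If $w$ is down, then $D_{\le(1-5\epsilon)k}(v)$ consists of at most $(1-5\epsilon)k$ path vertices together with at most $|D(w)|<\epsilon k$ further vertices, giving $|D_{\le(1-5\epsilon)k}(v)|<(1-4\epsilon)k$, again a contradiction. Hence $w$ is an up branch vertex with $d(v,w)\le(1-5\epsilon)k$; moreover $|D_{\le(1-5\epsilon)k}(v)|=d(v,w)+|D_{\le(1-5\epsilon)k-d(v,w)}(w)|$, so $|D_{\le(1-5\epsilon)k}(w)|>(1-4\epsilon)k-d(v,w)\ge\epsilon k$. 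Call a branch vertex $w$ \emph{fat} if $|D(w)|\ge\epsilon k$ and $|D_{\le(1-5\epsilon)k}(w)|>\epsilon k$. We have shown that every $v\in Y$ of height at least $(1-5\epsilon)k$ lies at distance at most $(1-5\epsilon)k$ on the single-child path descending to some fat branch vertex $w$, and these paths are pairwise disjoint for distinct $w$.

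It therefore suffices to bound the total length of the paths descending to fat branch vertices by $O(\epsilon^3n)$. The main tool is the elementary fact that a rooted tree on at least $m$ vertices contains at least $m$ vertices with fewer than $m$ descendants: the set $U$ of vertices with at least $m$ descendants is closed under taking ancestors, so for any minimal element $u$ of $U$ all (at least $m$) descendants of $u$ lie outside $U$, whence $U$ omits at least $m$ vertices. Applying this with $m=\epsilon k$ to the subtree rooted at a fat vertex $w$ (which has more than $\epsilon k$ vertices) produces at least $\epsilon k$ down vertices in that subtree. Allocating each down vertex to the lowest fat branch vertex above it makes these allocations pairwise disjoint; since there are at most $5\epsilon^4n$ down vertices, if every fat branch vertex received at least $\epsilon k$ of them we would get at most $5\epsilon^3n/k$ fat branch vertices, and hence $|Y|\le(1-5\epsilon)k\cdot 5\epsilon^3n/k+6\epsilon^3n=O(\epsilon^3n)$. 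The subtlety — and I expect this to be the only real obstacle — is that a fat vertex can have all of its down descendants claimed by fat branch vertices sitting below it; one resolves this by noting that the bound $5\epsilon^4n$ on the number of down vertices forces the threads hanging \emph{below} the fat branch vertices to be long (of length $\gg k$), so that $T$ cannot place more than $O(\epsilon^3n)$ of its vertices on the short (length $\le(1-5\epsilon)k$) paths descending to fat branch vertices.

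Granting $|Y|=O(\epsilon^3n)$, together with $|X|\le5\epsilon^4n$ we obtain $|X\cup Y|\le\epsilon n/40$ once $c$ is large. By Lemma~\ref{lem: heighthk} with $h=4$, at most $24\epsilon^3n$ vertices of $T$ have height less than $4k$, so at least $n/2$ vertices have height at least $4k$. For each such $v$, let $P_v$ be the vertical path of length $4k$ obtained by descending from $v$, at each step to a child of maximum height (well defined since the height drops by at most $1$ per step). If $b\in V(P_v)$ then $v$ equals $b$ or is an ancestor of $b$ at distance at most $4k$, so $b$ lies on at most $4k+1$ of the paths $P_v$; hence
\begin{align*}
\sum_{v:\,\mathrm{height}(v)\ge 4k}\bigl|V(P_v)\cap(X\cup Y)\bigr|=\sum_{b\in X\cup Y}\#\{v:\,b\in V(P_v)\}\le(4k+1)\,|X\cup Y|.
\end{align*}
Averaging over the at least $n/2$ vertices $v$ of height at least $4k$, some such $v$ satisfies
\begin{align*}
\bigl|V(P_v)\cap(X\cup Y)\bigr|\le\frac{2(4k+1)}{n}\,|X\cup Y|\le\frac{2(4k+1)}{n}\cdot\frac{\epsilon n}{40}\le\frac{\epsilon k}{4},
\end{align*}
and this $P_v$ is the desired vertical path.
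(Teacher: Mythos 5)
Your second half is sound: averaging over the vertical paths $P_v$ of length $4k$ started at the at least $n/2$ vertices of height at least $4k$, using that a fixed vertex lies on at most $4k+1$ of these paths, is essentially the paper's double count of the pairs $\mathcal{S}_3$ in a different guise, and it only needs $|X\cup Y|\le \epsilon n/40$. The genuine gap is in the first half, the claim $|Y|=O(\epsilon^3 n)$, and it sits exactly where you flag it: the statement you reduce to --- that the total length (capped at $(1-5\epsilon)k$) of the single-child threads descending to fat branch vertices is $O(\epsilon^3 n)$ --- does not follow from the hypotheses of the lemma, and in fact can fail completely. Take $T$ to be a spine path on $n=k^2$ vertices with one pendant leaf attached every $(1-5\epsilon)k$ steps. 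The down vertices are just the roughly $n/k$ pendant leaves plus the bottom roughly $\epsilon k$ spine vertices, far fewer than $5\epsilon^4 n$ for $k$ large; yet every attachment vertex is a fat branch vertex (two children, more than $\epsilon k$ descendants, about $(1-5\epsilon)k$ of them within distance $(1-5\epsilon)k$ along the spine), and the threads above these fat vertices cover essentially the whole spine, so their total capped length is $\Theta(n)$. Your reduction \emph{every non-skinny vertex of height at least $(1-5\epsilon)k$ lies within distance $(1-5\epsilon)k$ on the thread above some fat branch vertex} is correct but too lossy: in this example almost every vertex on such a thread is skinny (it sees only about $(1-5\epsilon)k+1$ descendants in its window), which is precisely the information your accounting discards. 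The sketched repair does not go through either: the down-vertex bound does not force the subtrees hanging below fat branch vertices to be long (here each side subtree is a single leaf), and allocating down vertices to the lowest fat ancestor gives most fat vertices only one down vertex, so no bound like $O(\epsilon^3 n/k)$ on the number of fat vertices is available.

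The fix is to bound $|Y|$ directly by double counting ancestor--descendant pairs, as the paper does: let $\mathcal{S}_2$ be the set of pairs $(v,w)$ with $v\in A(w)$ and $0<d(v,w)\le(1-5\epsilon)k$. Since each vertex has at most one ancestor at each distance, $|\mathcal{S}_2|\le(1-5\epsilon)kn$; by Lemma~\ref{lem: heighthk} with $h=1$, all but at most $6\epsilon^3 n$ vertices have height at least $k$ and each such vertex occurs as first coordinate in at least $(1-5\epsilon)k$ pairs, while each vertex of $Y$ occurs in more than $(1-4\epsilon)k$ pairs, i.e.\ contributes an extra $\epsilon k$. Comparing the two counts (and adding the at most $6\epsilon^3 n$ low vertices) yields $|Y|\le 6\epsilon^2 n+6\epsilon^3 n=O(\epsilon^2 n)$. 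This weaker bound is all you need: then $|X\cup Y|\le 5\epsilon^4 n+O(\epsilon^2 n)\le \epsilon n/40$ for $c$ sufficiently large, and your averaging argument finishes the proof unchanged.
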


\begin{proof}
Let $X$ be a subset of $V(T)$ of size at most $5\epsilon^4n$.
First we show that the set $Y\subseteq V(T)$ which contains the vertices that are not skinny is small enough for our purposes. 
We define the set 
\begin{align*}
	\mathcal{S}_2=\{(v,w) : v\in A(w),~ 0<d(v,w)\leq (1-5\epsilon)k \}.
\end{align*}
Since a vertex has at most one ancestor at any given distance, we conclude 
\begin{align*} 
	|\mathcal{S}_2|\leq (1-5\epsilon)kn. 
\end{align*}
By Lemma~\ref{lem: heighthk}, all but at most $6\epsilon^3n$ vertices $v$ are at height at least $k$ and 
thus, each such $v$ appears in at least $(1-5\epsilon)k$ pairs $(v,w)\in\mathcal{S}_2$. 
This contributes at least 
\begin{align*}
	(1-5\epsilon)(1-6\epsilon^3)kn
\end{align*}
pairs to the set $\mathcal{S}_2$. 
Since $|\mathcal{S}_2|\leq (1-5\epsilon)kn$, 
the number of vertices $v$ that appear in more than $(1-4\epsilon)k$ pairs $(v,w)\in \cS_2$ is at most $\left(1-5\epsilon\right) 6\epsilon^2n$,
as (if a vertex $v$ has appears in at least $(1-4\epsilon)k$ pairs $(v,w)$, then it contributes $\epsilon k$ more pairs to the lower bound given before)
\begin{align*}
	\left(1-5\epsilon\right)\left(1-6\epsilon^3\right)kn+
	\left(1-5\epsilon\right) 6\epsilon^2n\cdot \epsilon k=\left(1-5\epsilon\right)kn,
\end{align*}
is an upper bound for $|\mathcal{S}_2|$.

By the definition of $\mathcal{S}_2$
all vertices $v$ appearing in at most $(1-4\epsilon)k$ pairs $(v,w)\in \cS_2$ are skinny.
Hence,
\begin{align*}
	|Y|\leq \left(1-5\epsilon\right) 6\epsilon^2n\leq 6\epsilon^2n.
\end{align*}
Next we want to find the desired path $P$. 
We define the set
\begin{align*}
	\mathcal{S}_3=\left\{(v,w)\colon w\in X\cup Y,~ v\in A(w),~d(v,w)\leq 4 k\right\}.
\end{align*}
Since a vertex has at most one ancestor at each distance, 
for a pair $(v,w)\in\mathcal{S}_3$, 
the vertex $w$ can appear in at most $4 k$ different pairs in $\mathcal{S}_3$. 
We obtain 
\begin{align*}
	|\mathcal{S}_3|&\leq 4k \cdot|X\cup Y|\\
	&\leq 4k\cdot \left(5\epsilon^4n+ 6\epsilon^2n \right)\\
	&\leq 25\epsilon^2kn.
\end{align*}
This implies that the number of vertices $v$ that can appear in more than $\frac{1}{4}\epsilon k$ pairs $(v,w)\in\mathcal{S}_3$, 
is bounded from above by
\begin{align*}
	\frac{25\epsilon^2kn}{\frac{1}{4}\epsilon k}=100\epsilon n.
\end{align*}
By Lemma~\ref{lem: heighthk}, all but at most $24\epsilon^3n$ vertices of $T$ are at height at least $4k$ and from above follows that all but at most $100\epsilon n$ vertices $v$ appear in at most $\frac{1}{4}\epsilon k$ pairs $(v,w)\in\mathcal{S}_3$. 
Hence, for $c$ sufficiently large such that $\epsilon$ is small enough, 
there exists a vertex $v$ at height at least $4k$ that appears in at most $\frac{1}{4}\epsilon k$ pairs $(v,w)\in\mathcal{S}_3$. 
Let $P$ be the vertical path from $v$ to some vertex in $D_{4 k}(v)$. 
Then $P$ has length $4 k$ and by the choice of $v$, 
the path $P$ contains at most $\frac{1}{4}\epsilon k$ vertices in $X\cup Y$.
\end{proof}

\begin{proof}[Proof of Theorem~\ref{thm: cycle}]
Recall, $G$ is a graph of minimum degree at least $k$ and $p=\frac{c}{k}$ for $c$ sufficiently large.

We run the DFS-algorithm on $G_p$. 
Let $T$ be the spanning forest and let $E_u$ be the set of untested edges of $G$ that we obtain from this algorithm. 
By Lemma~\ref{lem: all free}, 
we may assume that all but at most $4\epsilon^4n$ vertices of $T$ are free, 
that is, incident with at least $(1-\epsilon)k$ untested edges in $E_u$. 
Due to property~(\ref{DFS4}) of the DFS-algorithm, 
for every untested edge $uv\in E_u$, either $u\in A(v)$ or $u\in D(v)$.

Assume that for more than $2\log k$ vertices $v$, we have
\begin{align}\label{case1}
	\Big|\left\{u:uv\in E_u,~ d(u,v)\geq(1-5\epsilon)k\right\}\Big| \geq \epsilon k.
\end{align}
This means, that we can find at least $\epsilon k\log k$ untested edges $uv\in E_u$ in $G$ with $d(u,v)\geq(1-5\epsilon)k$. 
Using Chernoff's inequality, 
we can easily find one of these edges present in $G_p$ with probability $1-o(1)$.
As we expect $\epsilon c \log k=\epsilon^{-4} \log k$ edges,
the probability for the event that at least one edge is present
is at least $1-2\exp(-\frac{\epsilon^{-4}\log k}{3})=1-o(1)$.
Thus we can a.a.s.\ find such an edge present in $G_p$ that forms together with $T$ a cycle of length at least $(1-5\epsilon)k$ in $G_p$.

Now assume that for all vertices $v$ except for at most $2\log k$, we have
\begin{align}\label{case2}
	\Big|\left\{u:uv\in E_u,~d(u,v)\geq(1-5\epsilon)k\right\}\Big| < \epsilon k.
\end{align}
Let $V_0$ be the set of vertices $v$ that do not satisfy (\ref{case2}), 
that is, $|V_0|\leq 2\log k$. 

\begin{claim}
A.a.s.\ there are at most $5\epsilon^4n$ down vertices.
\end{claim}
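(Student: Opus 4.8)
A.a.s. there are at most $5\epsilon^4 n$ down vertices.

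The plan is to count down vertices by arguing that a down vertex $v$ which is free cannot have many untested ancestors far from it in $T$; combined with the structural properties of the DFS-forest, this forces a contradiction with a double-counting bound. First I would restrict attention to the vertices that are both free and satisfy~(\ref{case2}): by Lemma~\ref{lem: all free} all but at most $4\epsilon^4 n$ vertices are free, and by assumption all but at most $2\log k$ vertices satisfy~(\ref{case2}), so it suffices to show that the number of down vertices among this remaining set is at most, say, $\epsilon^4 n$ (for $c$ large), since $4\epsilon^4n + 2\log k + \epsilon^4 n \le 5\epsilon^4 n$. Fix such a vertex $v$ that is free, satisfies~(\ref{case2}), and is down. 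Being free, $v$ is incident with at least $(1-\epsilon)k$ untested edges in $E_u$; by property~(\ref{DFS4}) each such edge $uv$ satisfies $u \in A(v)$ or $u \in D(v)$; being down, $|D(v)| < \epsilon k$, so at least $(1-\epsilon)k - \epsilon k = (1-2\epsilon)k$ of these untested edges go to ancestors of $v$. By~(\ref{case2}), fewer than $\epsilon k$ of those ancestors are at distance at least $(1-5\epsilon)k$ from $v$, so at least $(1-2\epsilon)k - \epsilon k = (1-3\epsilon)k$ of them lie in $A_{\le (1-5\epsilon)k}(v)$; in particular $v$ has more than $(1-3\epsilon)k$ distinct ancestors, hence $v$ is at height — wait, at \emph{depth} — at least $(1-3\epsilon)k$ in its tree.

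Now I would double-count pairs $(u,v)$ where $v$ is free, down, satisfies~(\ref{case2}), and $u \in A(v)$ with $uv \in E_u$ and $d(u,v) \le (1-5\epsilon)k$. On one hand, by the previous paragraph each such $v$ contributes at least $(1-3\epsilon)k$ such pairs, so if there were more than $\epsilon^4 n$ such down vertices the total would exceed $(1-3\epsilon)k \cdot \epsilon^4 n \ge \tfrac12 \epsilon^4 k n$ for $c$ large. On the other hand, I want an upper bound on the number of such pairs that is $o(\epsilon^4 kn)$, giving the contradiction. This is the step I expect to be the main obstacle: naively, a single vertex $u$ could be the ancestor witness for very many down descendants $v$, so the pairs are not obviously few. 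The resolution I would pursue is to bound the number of \emph{untested} edges $uv \in E_u$ with $u \in A(v)$ and $d(u,v) \le (1-5\epsilon)k$ globally using randomness — these edges are present in $G_p$ independently with probability $p$, and if there were more than, say, $\tfrac12 \epsilon^4 kn$ of them, then by Chernoff a.a.s. one of them would be present in $G_p$; but such a present edge $uv$ with $u$ an ancestor of $v$ at distance $d(u,v) \le (1-5\epsilon)k$ together with the vertical $u$–$v$ path in $T$ closes a cycle, and — here I would need to be slightly careful — this does not by itself contradict anything unless the cycle is long. So instead I would first argue that among free down vertices $v$ satisfying~(\ref{case2}), most of the $(1-3\epsilon)k$ ancestor-witnesses are in fact at distance at least $\epsilon k$ from $v$ (since $|D_{<\epsilon k}(v)|$ small forces at most $\epsilon k$ ancestors within distance $\epsilon k$ by the at-most-one-ancestor-per-distance rule), so each down vertex yields at least $(1-4\epsilon)k$ untested ancestor edges spanning a path of length at least $\epsilon k$; being present in $G_p$ any one of these closes a cycle of length at least $\epsilon k = c^{1/5}$, which by taking $c$ large is long enough to be useful, or more to the point, we can feed it into the same "find one present edge among $\ge \epsilon^4 k n / 2$ candidates" Chernoff argument to get an a.a.s. contradiction with nothing — so the correct conclusion is simply that a.a.s. the number of such untested candidate edges is at most $\epsilon^4 kn / 2$, whence at most $\epsilon^4 n$ such down vertices exist.

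Assembling: conditional on the a.a.s. events of Lemma~\ref{lem: all free} and of the Chernoff bound above, the number of down vertices is at most $4\epsilon^4 n + 2\log k + \epsilon^4 n \le 5\epsilon^4 n$ for $k$ large, which proves the claim. The only delicate point, as flagged, is correctly setting up the independence so that the candidate untested ancestor-edges can be treated as an independent $p$-coin family disjoint from the information revealed by the DFS exploration — but this is exactly the setup recorded at the start of Section~4 ("every untested edge of $G$ is in $G_p$ independently of $T$"), so the Chernoff application is legitimate once we observe that whether more than $\epsilon^4 kn/2$ such candidate edges exist is a function of $T$ and $E_u$ alone.
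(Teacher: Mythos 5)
You are one step away from the paper's argument and then veer off into a machinery that cannot work. After you deduce that a free, down vertex $v\notin V_0$ has at least $(1-2\epsilon)k$ untested edges to ancestors, and that by (\ref{case2}) fewer than $\epsilon k$ of these ancestors are at distance at least $(1-5\epsilon)k$, you conclude that $v$ has more than $(1-3\epsilon)k$ ancestors within distance $(1-5\epsilon)k$ --- but this is already an outright contradiction, by the very fact you invoke later: a vertex has at most one ancestor at each distance, hence at most $(1-5\epsilon)k$ ancestors within distance $(1-5\epsilon)k$, and $(1-3\epsilon)k>(1-5\epsilon)k$. That is the paper's entire proof: no vertex outside $V_0$ can be simultaneously free and down, so the down vertices are contained in $V_0$ together with the non-free vertices, giving at most $4\epsilon^4 n+2\log k\leq 5\epsilon^4 n$ a.a.s.\ by Lemma~\ref{lem: all free}. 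No double counting of pairs and no fresh application of Chernoff's inequality is needed.

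The route you pursue instead has a genuine gap. Your plan requires an upper bound of order $\epsilon^4 kn$ on the number of untested edges $uv\in E_u$ with $u\in A(v)$ and $d(u,v)\leq(1-5\epsilon)k$, and your proposed justification is: if there were more such edges, then a.a.s.\ one of them lies in $G_p$ and closes a cycle of length at least $\epsilon k$ --- which, as you yourself concede mid-argument, contradicts nothing (a cycle of length $\epsilon k=c^{-1/5}k$ is far shorter than the $(1-5\epsilon)k$ we are after, and in any case the presence of an untested edge in $G_p$ is not an event we have excluded). The conclusion ``so a.a.s.\ the number of such candidate edges is at most $\epsilon^4kn/2$'' therefore does not follow from anything; worse, it is false in general, since untested edges between vertical pairs at distance at most $(1-5\epsilon)k$ can number $\Theta(kn)$ (already for $G=K_{k+1}$, where most edges of $G$ are never tested and join nearby vertices on the DFS path). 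There is also a small arithmetic slip at the end: $4\epsilon^4n+2\log k+\epsilon^4 n$ exceeds $5\epsilon^4 n$, so even the bookkeeping would need adjusting. The fix is simply to drop the counting of pairs and use the pigeonhole observation above to rule out free down vertices outside $V_0$ deterministically.
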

\begin{proof}
Assume that some vertex $v\in V(T)\setminus V_0$ is free and down. 
Since $|D(v)|<\epsilon k$ and $v$ is free, 
there are at least $(1-\epsilon)k-\epsilon k= (1-2\epsilon)k$ pairs of untested edges $uv\in E_u$ with $u\in A(v)$. 
Since each vertex has at most one ancestor at each distance, $v$ has at least $(1-2\epsilon)k-(1-5\epsilon)k=3\epsilon k$ ancestors $u$ with $uv\in E_u$ and $d(u,v)\geq(1-5\epsilon)k$, 
which is a contradiction as $v\notin V_0$. 
Therefore, no down vertex in $V(T)\setminus V_0$ is free. 
By Lemma~\ref{lem: all free}, 
a.a.s.\ all but $4\epsilon^4n$ vertices are free. 
Hence, at most 
\begin{align*}
	4\epsilon^4n+|V_0|\leq 4\epsilon^4n+2\log k\leq 5\epsilon^4n
\end{align*}
vertices are down.
\end{proof}

Thus we may apply Lemma~\ref{lem: longpath}, 
where $X$ is the union of $V_0$ and the set of vertices that are not free,
that is, 
$|X|\leq 5\epsilon^4n$,
and recall that $Y$ is the set of vertices that are not skinny. 
Let $P$ be the path that is given by the Lemma~\ref{lem: longpath} 
and let $Z$ be the set of vertices of $V(P)\sm V_0$ that are free and skinny. 
By Lemma~\ref{lem: longpath}, we obtain
\begin{align*}
	\big|V(P)\setminus Z\big| = \big|(X\cup Y)\cap V(P)\big| \leq \frac{1}{4}\epsilon k.
\end{align*}
For any vertex $v\in Z$, 
there are at least $(1-\epsilon)k$ untested edges $uv\in E_u$ with $u\in A(v)\cup D(v)$. 
We want to show that there are sufficiently many of these vertices $u$ in $A(v)$.

Because of (\ref{case2}) and because $v\in Z$ implies $v\notin V_0$, 
at least $(1-2\epsilon)k$ of these vertices $u$ with $uv\in E_u$ satisfy $d(u,v)\leq(1-5\epsilon)k$. 
Moreover, as $v$ is skinny, at least $(1-2\epsilon)k-(1-4\epsilon)k=2\epsilon k$ vertices $u$ must be ancestors of $v$ 
with $d(u,v)\leq(1-5\epsilon)k$. 
We define a set of ancestors of $v$ within a certain distance, namely 
\begin{align*}
	B(v)=\{u\in A(v): uv\in E_u,~ \epsilon k\leq d(u,v)\leq(1-5\epsilon)k\}.
\end{align*}
Again, since $G$ has only one ancestor at each distance, we obtain $|B(v)|\geq \epsilon k$.  

Let $u_1\in V(P)$ be the vertex on the path $P$, 
which is at height $k$. 
Let $V_1$ be the set of the first descendants of $u_1$ on $P$, 
such that $\big|V_1\cap Z\big|\geq \log k$.  
Since $|V(P)\setminus Z| \leq \frac{1}{4}\epsilon k$, 
we have
\begin{align*}
	V_1\subset D_{\leq\frac{1}{4}\epsilon k+\log k}(v) \cap V(P).
\end{align*}
For each of these vertices $v\in V_1\cap Z$, 
we have $|B(v)|\geq \epsilon k$. 
Hence, there are at least $\epsilon k\log k$ untested edges $uv\in E_u$ such that $v\in V_1\cap Z$ and $u\in B(v)$. 
Using Chernoff's inequality similar as before, 
there is an edge $v_1u_2$ present in $G_p$ such that 
$v_1 \in V_1$, $u_2\in B(v_1)$ and $\epsilon k\leq d(v_1,u_2)\leq (1-5\epsilon)k$ with probability $1-o(k^{-1})$.

Let $V_2$ be the set of the first descendants of $u_2$ on $P$ 
such that $|V_2\cap Z|\geq \log k$. 
Thus for every vertex $w\in V_2$,
we have $d(w,u_1)\geq  \epsilon k -\frac{1}{4}\epsilon k -  2\log k > \frac{\epsilon}{2}k$.
 
Again, as $\big|V_2\cap Z\big|\geq \log k$, 
and there is an edge $v_2u_3$ present in $G_p$ with $v_2\in V_2$ and $u_3\in B(v_2)$ with probability $1-o(k^{-1})$.
 
Next, let $V_3$ be the set of the first descendants of $u_3$ on $P$ such that $\big|V_3\cap Z\big|\geq \log k$.

We may continue in this manner to find such edges $v_iu_{i+1}$ until we reach a vertex $u_{j+1}$ which is at least $2k$ steps higher than the vertex $v_1$.
Since each vertex $v_{i+1}$ is at least $\frac{1}{2}\epsilon k$ steps above $v_i$, 
after at most $4\epsilon^{-1}$ many steps we reach the vertex $u_{j+1}$, 
that is, $j\leq 4\epsilon^{-1}$. 
Thus the procedure does not fail with  probability $1-o(\epsilon^{-1} k^{-1})=1-o(1)$.

Note that we also remain within the path $P$, since $P$ has length at least $4k$ and we start at most at height $k$ and with each step we go up at most $(1-5\epsilon)k$.

Suppose $j$ is even.
Consider the following cycle $C$: 
$$v_1u_2Pv_3u_4Pv_5u_6Pv_7 \ldots v_ju_{j+1}Pu_jv_{j-1}Pu_{j-2} \ldots u_2v_1.$$
Note that every vertex in $V(P)\sm V(v_1Pu_{j+1})$ is contained in some $V_i$.
Therefore, the length of $C$ is at least
\begin{align*}
	2k - 4\epsilon^{-1}\log k - \frac{1}{4}\epsilon k>k.
\end{align*}
A similar argument applies if $j$ is odd.
\end{proof}

\section{Long Cycles in Pseudo-Cliques}\label{sec:pseudo-clique}

Consider the well-known $G(n,p)$-model and with our notation a uniform at random chosen member is $(K_n)_p$.
It is very natural and intuitive that 
$(K_n)_p$ and $H_p$ typically have the same properties
if $H$ is a graph on $n$ vertices which is almost a clique.
In this section we indicate that a result of Frieze~\cite{Fri86} can be suitably modified.

Let $\gamma>0$ be a constant sufficiently small.
We call a graph $G$ on $n$ vertices a \emph{$k$-pseudo-clique} (or simply pseudo-clique) if its minimum degree is at least $k$ and $n\leq (1+ \gamma)k$.
We start with some properties of a pseudo-clique $G$, but before we need to introduce some notation.

A vertex $v$ has \emph{small} degree if $d(G)\leq \frac{c}{10}$ and otherwise its degree is \emph{large}.
Let $S$ and $L$ be the set of all vertices of small and large degree in $G_p$, respectively.
For $1\leq i\leq 4$, 
let $W_i$ be the set of all vertices $v$ of small degree
such that there is a vertex $w$ of small degree and a $v,w$-path of length $i$
or $v$ is contained in a cycle of length $i$.
We set $W=W_1 \cup \ldots \cup W_4$.

The following lemmas are extensions of the results of Frieze~\cite{Fri86},
who prove the analogous results for $G=K_{k+1}$.
As the proofs are quite standard, a bit tedious and can be done along the lines of the proofs of Frieze,
we omit the proofs.

\begin{lemma}
Let $G$ be a $k$-pseudo-clique on $n$ vertices, $p=\frac{c}{k}$ and let $\ell\geq 7$ be an integer.
Then a.a.s.\ $G_p$ has the following properties,
\begin{enumerate}[(a)]
	\item $|\{v\in V(G): d_{G_p}(v)\leq \frac{c}{10}+1\}|\leq (1+ \gamma)ke^{-\frac{2}{3}c}$,
	\item for all sets $Z\subset V(G)$ with $|Z|\geq ke^{-c}$, 
	we have $|\{e\in E(G_p): e\cap S\not= \es\}|\leq 4c|S|$,
	\item $\Delta(G_p)\leq 4\log k$,
	\item $|W|\leq c^4 e^{- \frac{4c}{3}}k$,
	\item $\es \not= Z \subseteq L$ and $|Z|\leq \frac{k}{2\ell}$ implies $|N_{G_p}(Z)|\geq \ell |Z|$, and
	\item $ Z\subseteq V(G)$ and $ \frac{k}{2\ell}\leq |Z|\leq \frac{1}{2}k$ implies $m(G_p[Z])\geq \frac{c|Z|}{3\ell}$.
\end{enumerate}
\end{lemma}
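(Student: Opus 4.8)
The plan is to prove each of (a)--(f) by the standard first-moment-plus-Chernoff arguments, exactly paralleling Frieze~\cite{Fri86} but carrying along the extra generality that $G$ is an arbitrary $k$-pseudo-clique rather than $K_{k+1}$. Throughout I would use that every vertex of $G$ has degree between $k$ and $(1+\gamma)k$, so that $\bE d_{G_p}(v) = p\cdot d_G(v) \in [c, (1+\gamma)c]$, and that $n \le (1+\gamma)k$; the $(1+\gamma)$ factors only inflate constants and never change the shape of the bounds. The key recurring tool is Theorem~\ref{thm: cher}.

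For (a): a vertex $v$ lands in the bad set only if $d_{G_p}(v) \le \frac{c}{10}+1$, which (since $\bE d_{G_p}(v)\ge c$) is a large deviation below the mean; Chernoff gives $\bP[d_{G_p}(v)\le \frac{c}{10}+1] \le e^{-(2/3+o(1))c}$, and then I would either take expectations and apply Markov, or — to get an a.a.s.\ statement with the stated constant — note that these events are independent across vertices up to the shared edges and apply a second-moment or Chernoff bound on the count, yielding that a.a.s.\ at most $(1+\gamma)ke^{-\frac 23 c}$ vertices are bad. For (c), $\Delta(G_p)\le 4\log k$ is immediate from a union bound over the $n\le(1+\gamma)k$ vertices together with the Chernoff tail $\bP[d_{G_p}(v) \ge 4\log k] \le k^{-\omega(1)}$ since $\bE d_{G_p}(v) = O(c) = O(1)$. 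For (b), conditioning on $S$ (the small-degree vertices), each vertex of $S$ has $G_p$-degree at most $\frac{c}{10}$ by definition, and the edges with at least one endpoint in $S$ number at most $\sum_{v\in S} d_{G_p}(v) \le \frac{c}{10}|S| \le 4c|S|$ deterministically; the role of the hypothesis $|Z|\ge ke^{-c}$ is presumably to ensure $S\neq\es$ is not vacuous or to tie $|S|$ to the bound from (a) — I would follow Frieze's formulation here. For (d), $|W|\le c^4 e^{-4c/3}k$ follows by a union bound: the number of short paths/cycles (length $\le 4$) between two small-degree vertices is governed by a product of two ``small-degree'' probabilities ($\sim e^{-2c/3}$ each, giving $e^{-4c/3}$) and at most $c^4$ choices for the connecting internal structure (each edge present with probability $p=c/k$, each intermediate vertex chosen among $\le (1+\gamma)k$, so a factor of order $c$ per path-edge), times the $\le (1+\gamma)k$ choices of $v$; taking expectation and Markov gives the claim.

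The expansion statements (e) and (f) are the heart of the lemma and where I expect the main obstacle. For (e): fix $Z\subseteq L$ with $|Z|\le \frac{k}{2\ell}$ and suppose $|N_{G_p}(Z)| < \ell|Z|$; then all the $\ge k|Z| - \binom{|Z|}{2}$ edges of $G$ leaving $Z$ (using $\delta(G)\ge k$) must land inside $Z\cup N$, a set of size $< (\ell+1)|Z| \le \frac{(\ell+1)k}{2\ell} \le k$, so there are few ``slots''; since $|Z| \le \frac{k}{2\ell}$, a constant fraction of those $\ge k|Z|(1-o(1))$ potential edges must be absent in $G_p$, which is an exponentially unlikely deviation, and a union bound over the $\binom{n}{|Z|}\binom{n}{\ell|Z|}$ choices of $(Z,N)$ is absorbed because the per-configuration probability decays like $e^{-\Omega(c|Z|)}$ against an entropy cost of $e^{O(|Z|\log(1/\cdots))}$ — this is exactly where one needs $c$ (equivalently, the edge probability) large enough relative to $\ell$, and getting the bookkeeping to close uniformly over $1\le |Z|\le \frac{k}{2\ell}$ is the delicate part. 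Statement (f) is analogous but one-sided in a different way: for $\frac{k}{2\ell}\le|Z|\le\frac k2$, the graph $G[Z]$ already has $m(G[Z])$ bounded below because $\delta(G)\ge k$ forces each vertex of $Z$ to have $\ge k - (n-|Z|) \ge k - (1+\gamma)k + |Z| \ge |Z| - \gamma k$ neighbours inside $Z$ (for $|Z|$ in this range and $\gamma$ small this is $\Omega(k)$), so $G[Z]$ has average degree $\Omega(k)$; then $G_p[Z]$ has $\Omega(c)$ average degree in expectation, and a Chernoff-plus-union-bound over the $\binom{n}{|Z|}$ choices of $Z$ — with the exponential savings coming from $|Z|\ge\frac{k}{2\ell} = \Omega(k)$ being large — gives $m(G_p[Z])\ge \frac{c|Z|}{3\ell}$ a.a.s. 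As the paper states, these are routine but tedious; I would present only the setup for each and cite Frieze for the arithmetic, flagging that the only genuinely new ingredient is replacing ``$K_{k+1}$'' by ``$\delta(G)\ge k$, $n\le(1+\gamma)k$'' in every edge-count, which costs at most the stated $(1+\gamma)$ factors and a choice of $\gamma$ small enough that $(1+\gamma)$-perturbations do not swamp the $e^{-c}$-type gaps.
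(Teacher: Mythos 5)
The paper itself does not prove this lemma: it is stated with the remark that Frieze's arguments for $G=K_{k+1}$ carry over, so your overall plan (redo Frieze's estimates using only $\delta(G)\ge k$ and $n\le(1+\gamma)k$) is the intended route, and your sketches for (a), (c), (d) are essentially fine (modulo the fact that Markov alone leaves a constant failure probability in (a) and (d); you flag the needed second-moment step for (a), and the same is needed for (d)). The genuine gap is in your argument for (e): it never uses the hypothesis $Z\subseteq L$, and without that hypothesis the statement is false, since a.a.s.\ $G_p$ contains vertices of degree less than $\ell$ (each vertex has this property with probability at least roughly $e^{-(1+\gamma)c}$, a constant, so a.a.s.\ $\Theta(k)$ such vertices exist), and any one of them is a counterexample with $|Z|=1$. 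Correspondingly the union bound you propose cannot close: for a fixed pair $(Z,N)$ the probability that all $G$-edges from $Z$ to $V(G)\setminus(Z\cup N)$ are absent in $G_p$ is only $e^{-O(c|Z|)}$ with $c$ a \emph{constant}, while the number of pairs is $\binom{n}{|Z|}\binom{n}{\ell|Z|}=e^{\Theta(|Z|\log k)}$ for small $|Z|$; the entropy grows with $k$ and swamps the probability no matter how large $c$ is relative to $\ell$. The route that actually works (Frieze's) is deterministic given the other properties: every vertex of $L$ has $G_p$-degree at least $\frac{c}{10}$, so if $|N_{G_p}(Z)|<\ell|Z|$ then $Z\cup N_{G_p}(Z)$ is a set of size at most $(\ell+1)|Z|\le k$ spanning at least $\frac{c|Z|}{20}$ edges of $G_p$; one then needs the a.a.s.\ statement that no set of size $m$ spans as many as $\beta m$ edges with $\beta=\frac{c}{20(\ell+1)}$ (this is what the garbled item (b) is meant to encode). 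There the per-set probability is at most $\left(\frac{emc}{2\beta k}\right)^{\beta m}$, i.e.\ it carries a $(m/k)^{\beta m}$ factor that does beat the entropy once $c$ is large compared with $\ell$ — this $k$-dependent gain is exactly what your absent-edges computation lacks.

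A smaller but real issue is (f). Your own estimate gives each $v\in Z$ at least $|Z|-\gamma k$ $G$-neighbours inside $Z$, so at the bottom of the range, $|Z|=\frac{k}{2\ell}$, the expected number of edges of $G_p[Z]$ you can guarantee is only about $\frac{c|Z|}{4\ell}$, which is \emph{below} the claimed threshold $\frac{c|Z|}{3\ell}$; a Chernoff lower-tail bound therefore cannot deliver the stated constant from your setup (the same tension already appears for $G=K_{k+1}$, where a set of size exactly $\frac{k}{2\ell}$ has expected edge count about $\frac{c|Z|}{4\ell}$, so the constants must either be tracked exactly as in Frieze or the statement adjusted). The union-bound part of your (f) argument is fine, since for $|Z|=\Omega(k)$ the entropy $\binom{n}{|Z|}\le e^{O(k)}$ is beaten by $e^{-\Omega(ck/\ell)}$ when $c$ is large relative to $\ell$; it is only the expectation bookkeeping that does not close as written.
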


\begin{lemma}
Let $G$ be a $k$-pseudo-clique on $n$ vertices, $p=\frac{c}{k}$, and
let $X_1,X_2,\ldots $ be a sequence obtained by the following rule
\begin{align*}
	X_i = \left\{ v\in V(G): \left|N_{G_p}(v) \cap \left(S\cup \bigcup_{j=1}^{i-1}X_j\right)\right|\geq 2\right\}.
\end{align*}
If $X=\bigcup_{j\geq 1}X_j$,
then $|X|\leq 500c^4e^{- \frac{4c}{3}}k$ a.a.s. 
\end{lemma}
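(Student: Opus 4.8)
The plan is to follow Frieze's argument~\cite{Fri86} for the special case $G=K_{k+1}$ essentially verbatim: every estimate there uses only that the host graph has minimum degree at least $k$ and at most $(1+\gamma)k$ vertices, and both facts hold for a $k$-pseudo-clique, so replacing the clique by a pseudo-clique changes nothing of substance. Throughout I would condition on the a.a.s.\ event that the conclusions of the preceding lemma hold; in particular $|S|\le(1+\gamma)ke^{-2c/3}$ by~(a), $\Delta(G_p)\le 4\log k$ by~(c), and $|W|\le c^{4}e^{-4c/3}k$ by~(d) -- the last of these already making it plausible that the asserted bound, of the same order, is the truth.

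The first step is to record that the process is monotone: since $S\cup\bigcup_{j<i}X_j$ grows with $i$, the sets $X_i$ are nested and increasing, so $X=\bigcup_j X_j$ is the smallest set $Z$ with $Z=\{v:|N_{G_p}(v)\cap(S\cup Z)|\ge 2\}$; equivalently, $S\cup X$ is precisely the closure of $S$ under $2$-neighbour bootstrap percolation in $G_p$. Hence every $v\in X$ carries a \emph{witness}: a bounded, tree-like sub-configuration rooted at $v$ in which every non-leaf has at least two children exposed before it and all of whose leaves lie in $S$ (note that the witness never terminates at ``$v\in S$'', so it is non-trivial even for $v\in X\cap S$). The heart of the proof is the estimate $\bE|X|=O(c^{2}e^{-4c/3}k)$, which leaves plenty of room below $c^4e^{-4c/3}k$ and which I would prove by a union bound over witnesses. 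The key input is that $S$ is ``spread out'': by the Chernoff estimate behind~(a) a fixed vertex lies in $S$ with probability at most $e^{-2c/3}$, and the degree events putting distinct vertices into $S$ are governed by essentially disjoint edges. A witness with $s$ non-leaves has at least $s+1$ leaves and at least $2s$ edges, so -- choosing its vertices and requiring its edges to be present in $G_p$ -- it contributes (up to constants and a factor depending only on $s$, namely the number of underlying shapes) at most $((1+\gamma)k)^{2s}(c/k)^{2s}(e^{-2c/3})^{s+1}=((1+\gamma)c)^{2s}(e^{-2c/3})^{s+1}$. Summing the resulting geometric series in $s$ -- dominated, for $c$ large, by $s=1$, i.e.\ by ``$v$ has two $G_p$-neighbours in $S$'' -- gives $\bP[v\in X]=O(c^{2}e^{-4c/3})$ and hence the claimed expectation bound. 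I would then upgrade this to $|X|\le 500c^{4}e^{-4c/3}k$ a.a.s.\ by the same kind of union bound, this time over vertex-subsets $A$ of size $500c^{4}e^{-4c/3}k$, using that for such an $A$ to lie inside $X$ each $v\in A$ must possess its own witness and that witnesses built around far-apart vertices involve disjoint sets of edges (so the per-vertex probabilities essentially multiply); the gap between $c^2$ and $c^4$ in the exponent absorbs all the slack.

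The one genuinely delicate point -- and, as the authors indicate, the only part requiring care rather than bookkeeping -- will be the statistical dependence between $S$, the sets $X_i$, and the edges of $G_p$: one cannot simply condition on $S$ and treat the remaining edges as fresh coins. As in Frieze's proof, I would handle this by exposing the edges of $G_p$ in a controlled order, exploring a bounded breadth-first ball around each candidate vertex $v$ so that the witness event for $v$ depends only on coins inside that ball, balls around far-apart vertices being edge-disjoint; here $\Delta(G_p)\le 4\log k$ from~(c) guarantees that these balls have only sub-polynomial size, which is what makes both the exposure scheme and the truncation to witnesses of size $O(1)$ (the tail being a convergent geometric series) legitimate, and one also has to carry along the degrees seen inside the ball so that the $e^{-2c/3}$ bound can be reapplied at each leaf. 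These manoeuvres are standard, which is why -- like the authors -- I would only sketch them.
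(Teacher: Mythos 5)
There is nothing in the paper to compare your argument against line by line: the authors explicitly omit the proofs of all three lemmas in Section~5, saying only that they ``can be done along the lines of the proofs of Frieze''. Your overall plan -- rerun Frieze's argument, observing that it only uses minimum degree at least $k$ and $n\le(1+\gamma)k$, and control $X$ via witness trees for the $2$-neighbour bootstrap closure of $S$, giving a first-moment bound of order $c^{2}e^{-4c/3}k$ -- is therefore exactly the route the paper intends, and that part of your sketch is sound in spirit (including your remark that conditioning on a witness edge at a leaf still leaves the small-degree probability at $e^{-2c/3}$ up to constants).

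The one place where your sketch, as written, would not go through is the upgrade from $\bE|X|=O(c^{2}e^{-4c/3}k)$ to the a.a.s.\ statement. Recall that in this paper ``a.a.s.''\ means probability tending to $1$ as $k\to\infty$ with $c$ a fixed (large) constant, so Markov's inequality only gives failure probability of order $c^{-2}$, not $o(1)$; some genuine concentration argument is needed. Your proposed union bound over sets $A$ of size $500c^{4}e^{-4c/3}k$ silently multiplies the per-vertex witness probabilities, but the events $\{v\in X\}$ for $v\in A$ are positively correlated and the vertices of $A$ need not be far apart -- many of them can share the same two vertices of $S$ or the same partial witness -- so ``witnesses around far-apart vertices are edge-disjoint'' does not justify the bound $\bP[A\subseteq X]\lesssim q^{|A|}$. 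The standard repair (and, in essence, Frieze's) avoids per-vertex witnesses altogether: ordering the vertices of $X\setminus S$ by the round in which they enter, each contributes at least two $G_p$-edges inside $S$ together with the earlier vertices, so $|X\setminus S|\ge m$ forces an $m$-set $B$ with $e_{G_p}(S\cup B)\ge 2m$; one then union-bounds over $B$, over the choices of the two witness neighbours of each vertex of $B$, and over which of those witnesses are required to lie in $S$ (paying $e^{-2c/3}$ per \emph{distinct} such vertex and accounting for witnesses inside $B$ itself), on the a.a.s.\ event $|S|\le(1+\gamma)ke^{-2c/3}$. This is the bookkeeping the authors call ``tedious''; your sketch should at least name this mechanism rather than appeal to independence of witnesses.
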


Let $V_2$ be vertex set of the largest subgraph of $G_p$ with minimum degree $2$ ($G_p[V_2]$ is also known as the $2$-core).
Moreover, let $Y$ be the set of all vertices $v$ in $G$ which have degree $2$ and have a neighbor in $X$ in $G_p$.
Let $A=V_2\sm (W \cup X \cup Y)$.
\begin{lemma}
Let $G$ be a $k$-pseudo-clique on $n$ vertices and $p=\frac{c}{k}$.
Then, a.a.s.
\begin{align*}
	|A|\geq \left(1- (1+\epsilon(c))ce^{-c}\right)k,
\end{align*}
where $\epsilon(c) \rightarrow 0$ as $c \rightarrow\infty$.
\end{lemma}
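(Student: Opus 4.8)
The plan is to control the four sets whose removal turns $V(G)$ into $A$. Writing $C=V(G)\sm V_2$ for the complement of the $2$-core, we have, since $A=V_2\sm(W\cup X\cup Y)$ and $A\subseteq V(G)$,
\begin{align*}
	|A|\ \geq\ |V_2|-|W|-|X|-|Y|\ =\ n-|C|-|W|-|X|-|Y|.
\end{align*}
I would show that a.a.s.\ $|C|\leq(1+o_c(1))(1+c)e^{-c}n$ and $|W|+|X|+|Y|\leq o_c(ce^{-c})\cdot k$, where $o_c(1)$ denotes a quantity tending to $0$ as $c\to\infty$. Since $(1+c)e^{-c}=(1+\tfrac1c)ce^{-c}$ and $k\leq n$, these two estimates give $|A|\geq n\bigl(1-(1+o_c(1))ce^{-c}\bigr)$; and as the bracket is positive for $c$ large, the slack $n\geq k$ then yields $|A|\geq k\bigl(1-(1+o_c(1))ce^{-c}\bigr)=k\bigl(1-(1+\epsilon(c))ce^{-c}\bigr)$ with $\epsilon(c)=\tfrac1c+o_c(1)\to0$, as required. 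Note that this is exactly where the extra room $n\leq(1+\gamma)k$ in the definition of a pseudo-clique is harmless: the loss is measured as a \emph{fraction} of $n$, not as an absolute count.

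The bound on $|C|$ is the heart of the matter, and I would derive it along the lines of the classical $2$-core analysis of Frieze~\cite{Fri86}. Recall that $V_2$ arises from $G_p$ by iteratively deleting vertices of current degree at most~$1$, and that the very first round already accounts for the main term: for $v$ with $d_G(v)=m\geq k$ we have $\bP[d_{G_p}(v)\leq1]=(1-p)^{m-1}\bigl(1+(m-1)p\bigr)\leq(1+x)e^{-x}$ with $x=(m-1)p\geq c(1-\tfrac1k)$, so by monotonicity of $(1+x)e^{-x}$ on $(0,\infty)$ this is $(1+c)e^{-c}(1+o_k(1))$; hence the expected number of degree-$\leq1$ vertices of $G_p$ is at most $(1+c)e^{-c}n(1+o_k(1))$, and a second moment computation — the degrees of non-adjacent vertices are independent and $G$ has $O(n^2)$ edges — shows the same bound holds a.a.s. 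For the later rounds, a vertex deleted after round one has in $G_p$ all but at most one neighbour already deleted, so comparing its $G_p$-neighbourhood with a branching process whose offspring distribution is dominated by $\bin((1+\gamma)k,p)$ (as Frieze does for the clique) bounds the probability of late deletion by $o_c(ce^{-c})$ per vertex; these rounds therefore delete only $o_c(ce^{-c})\cdot n$ further vertices. Altogether $|C|\leq(1+o_c(1))(1+c)e^{-c}n$ a.a.s.

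It remains to bound the three exceptional sets. Part~(d) of the first lemma of this section gives $|W|\leq c^4e^{-4c/3}k$ and the second lemma gives $|X|\leq500c^4e^{-4c/3}k$, and since $c^3e^{-c/3}\to0$ both are $o_c(ce^{-c})\cdot k$. For $Y$, note $Y\subseteq N_{G_p}(X)$, so $|Y|\leq\sum_{v\in X}d_{G_p}(v)$, and I would split this sum according to whether $d_{G_p}(v)<10c$ or $d_{G_p}(v)\geq10c$. The low-degree part is at most $10c\,|X|\leq5000c^5e^{-4c/3}k=o_c(ce^{-c})\cdot k$. The high-degree part is at most $\sum_{v:\,d_{G_p}(v)\geq10c}d_{G_p}(v)$, whose expectation is $o_c(ce^{-c})\cdot k$ by the Poisson-type large-deviation bound for the $G_p$-degrees (which have mean at most $(1+\gamma)c$), and which lies a.a.s.\ within a constant factor of its mean by a second moment estimate. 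Hence $|Y|=o_c(ce^{-c})\cdot k$ a.a.s. One should \emph{not} bound $|Y|$ via $\Delta(G_p)\leq4\log k$ from part~(c): that gives only $|Y|\leq4|X|\log k$, which for fixed $c$ is too large once $k$ grows.

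Combining the displays with the first paragraph gives the lemma. I expect the main obstacle to be the \emph{sharp} leading constant in $|C|\leq(1+o_c(1))(1+c)e^{-c}n$: one needs not merely a bound of the right order but the exact coefficient, so the iterated peeling beyond round one must be shown to be of strictly lower order and the whole quantity must be shown to concentrate. This is standard but tedious — essentially the branching-process computation of Frieze~\cite{Fri86} for $G=K_{k+1}$ — and it goes through using only the hypotheses $\delta(G)\geq k$ and $n\leq(1+\gamma)k$.
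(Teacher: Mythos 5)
The paper never actually writes out a proof of this lemma: it states it (with the two preceding lemmas) and explicitly omits the proofs as ``quite standard, a bit tedious'', to be done along the lines of Frieze's analysis for $G=K_{k+1}$. Your proposal follows exactly that intended route --- write $|A|\ge n-|C|-|W|-|X|-|Y|$, get the main term from the first peeling round (where $\delta(G)\ge k$ gives $\bP[d_{G_p}(v)\le 1]\le(1+o_k(1))(1+c)e^{-c}$), show later peeling rounds and the sets $W,X,Y$ contribute only $o_c(ce^{-c})$ fractions, and convert from $n$ to $k$ at the end --- so there is no genuinely different paper argument to compare with, and as a sketch yours is sound; in particular your remark that $\Delta(G_p)\le 4\log k$ must \emph{not} be used to bound $|Y|$ (it would cost a factor $\log k$) is a correct and useful observation.

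Two points would need care if you wrote this out in full. First, the branching-process comparison for late deletion goes the other way than you state: to bound the probability that a neighbour's branch dies out (which is what makes a vertex of $G_p$-degree $\ge 2$ get peeled later), you need the offspring distribution bounded \emph{below}, e.g.\ by $\bin(k-O(\text{explored}),p)$ using the minimum-degree hypothesis; domination from above by $\bin((1+\gamma)k,p)$ only controls degrees and large neighbourhoods, not extinction probabilities. Second, since the statement is a.a.s.\ as $k\to\infty$ for each fixed large $c$, Markov's inequality alone is not enough for the later peeling rounds or for the heavy-degree sum in your bound on $|Y|$ (it would only give a failure probability $O(1/c)$, independent of $k$); the second-moment concentration you allude to for the first round has to be carried out for those quantities as well. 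Both issues are fixable and lie squarely within the ``standard but tedious'' work the paper itself elects to skip.
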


Having proved these three lemmas for pseudo-cliques,
one can go once again along the lines of the result of Frieze to obtain the following.

\begin{theorem}\label{thm: pseudo-clique}
If $G$ be a $k$-pseudo-clique on $n$ vertices and $p=\frac{c}{k}$,
then a.a.s.\ $G_p$ contains a cycle of length at least
\begin{align*}
	\left(1- (1+\epsilon(c))ce^{-c}\right)k,
\end{align*}
where $\epsilon(c)\rightarrow 0$ as $c \rightarrow\infty$.
\end{theorem}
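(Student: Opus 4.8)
The plan is to transcribe Frieze's lower-bound argument for the longest cycle in $G(n,c/n)$ \cite{Fri86} to the pseudo-clique $G$, feeding it the three lemmas above in place of Frieze's analogous statements for $K_{k+1}$. First I would condition on the (constantly many) a.a.s.\ events guaranteed by those three lemmas, so that from now on all the listed estimates hold deterministically for the fixed random graph $G_p$. The object on which the whole argument runs is the cleaned $2$-core $A=V_2\setminus(W\cup X\cup Y)$: we start from $G_p[V_2]$, which has minimum degree at least $2$, and discard the vertices that are near a small-degree vertex ($W$), the vertices reached by the infection sequence $X_i$ from the small-degree set ($X$), and the degree-$2$ vertices adjacent to $X$ ($Y$).

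The three lemmas hand us exactly the structural inputs that Frieze's proof consumes. By the third lemma $|A|\ge(1-(1+\epsilon(c))ce^{-c})k$, and by the first two lemmas $|W\cup X\cup Y|=O(c^4e^{-4c/3}k)$; since $e^{-4c/3}=o(ce^{-c})$, and likewise $e^{-2c/3}=o(ce^{-c})$, every error term of this order is negligible against the main term $ce^{-c}k$ and gets absorbed into a slightly larger function $\epsilon(c)\to0$ at the end. From the first lemma we also keep $\Delta(G_p)\le 4\log k$ together with the two expansion statements: subsets of $L$ of order at most $k/(2\ell)$ expand by a factor $\ell$, and subsets of order between $k/(2\ell)$ and $k/2$ span at least $c|Z|/(3\ell)$ edges, where $\ell$ may be taken as a slowly growing function of $c$. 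Deleting $W\cup X\cup Y$ is precisely what makes these properties usable on $A$ itself: the surviving small-degree vertices are pairwise far apart and have no infected neighbours, so $G_p[A]$ still satisfies expansion and edge-density bounds of the same shape (with marginally worse constants), and in particular all but $o(ce^{-c}k)$ vertices of $A$ lie in one component that expands, since a small component inside $L$ would contradict the factor-$\ell$ expansion and two large components would contradict the edge bound.

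On this robustly expanding, bounded-degree graph I would then run Frieze's rotation--extension (P\'osa-type) scheme exactly as in \cite{Fri86}: build a long path, use that its endpoints cannot be extended to perform rotations, observe that the resulting endpoint set is large and has its whole neighbourhood on the path, so that the path already covers all but $o(ce^{-c}k)$ vertices of $A$ and can be closed into a cycle; the deletion of the infection sets $X_i$ and of $Y$ is what keeps every rotation inside the well-behaved part of the core. The cycle produced has length at least $|A|-o(ce^{-c}k)\ge(1-(1+\epsilon(c))ce^{-c})k$ after redefining $\epsilon(c)$, which is the claimed bound; it agrees with the longest-path value of Theorem~\ref{thm: path}.

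The main obstacle is the rotation--extension step. For $K_{k+1}$ it is routine because every vertex off the current path sees the path directly, whereas here one has only factor-$\ell$ expansion up to linear size and the weaker per-vertex edge bound for medium sets, so one must carry out Frieze's argument with some care to guarantee that the cycle misses only $o(ce^{-c}k)$ vertices rather than, say, $\Theta(k/c)$, and that no rotation ever escapes into $W\cup X\cup Y$ --- which is exactly the reason Frieze's infection process and the auxiliary set $Y$ had to be reproduced in the pseudo-clique setting. Granting the three lemmas, this is bookkeeping along Frieze's lines rather than a new idea, which is why we only indicate it here.
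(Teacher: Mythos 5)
Your proposal follows exactly the route the paper itself takes: the paper gives no detailed proof of Theorem~\ref{thm: pseudo-clique} either, but states the three pseudo-clique lemmas and then invokes Frieze's argument ``along the lines of'' \cite{Fri86}, which is precisely your plan of conditioning on the a.a.s.\ events, working on the cleaned $2$-core $A=V_2\setminus(W\cup X\cup Y)$, and absorbing the $O(c^4e^{-4c/3}k)$ error terms into $\epsilon(c)$. So your sketch is correct and matches the paper's (equally sketchy) approach, including the acknowledgement that the remaining work is a careful but routine transcription of Frieze's rotation--extension machinery.
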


\section{Long Paths}

This section is devoted to the proof of Theorem~\ref{thm: path}.
This proof is inspired by a result in \cite{KLS15} proving that a.a.s.\ the random subgraph $G_p$ of a graph $G$ of minimum degree at least $k$ contains a path of length $k$ if $p=\frac{(1+ \epsilon)\log k}{k}$ for any fixed $\epsilon>0$.

\begin{proof}[Proof of Theorem~\ref{thm: path}]
Let $c$ be sufficiently large and let $\epsilon=5\left(\frac{c}{3}\right)^{-1/5}$.
If $G$ contains a set $V'\subseteq V(G)$ such that 
\begin{align}\label{eq: dense set}
	\left(1- \frac{1}{\log k}\right)k \leq |V'| \leq (1+ 10\epsilon)k
\end{align}
and the minimum degree of the graph $G[V']$ is at least $(1- \frac{2}{\log k})k$ , then by Theorem~\ref{thm: pseudo-clique}, 
$G_p$ a.a.s.\ contains a cycle of length at least 
\begin{align*}
	\left(1- (1+ \epsilon(c))ce^{-c}\right)k,
\end{align*}
for some function $\epsilon(c) \rightarrow 0$ as $c \rightarrow \infty$, which implies the statement. 
Hence, we may assume that $G$ does not contain such a set $V'$.

In the following, we use a technique which is known as \emph{sprinkling}.
In our case, we expose the edges of $G_p$ in three rounds
and in each round we suppose an edge to be present independently with probability $\frac{c}{3k}$.
Thus we consider the union of three graphs $G_{p_1}\cup G_{p_2}\cup G_{p_3}$, where $p_i=\frac{c}{3k}$.
As
\begin{align*}
	1-(1-p_1)(1-p_2)(1-p_3)=\left(1-\frac{c}{3k}+\frac{c^2}{27k^2}\right)\frac{c}{k}\leq p,
\end{align*}
the union of these three graphs underestimates the model $G_p$.
Therefore, if we can show that $G_{p_1}\cup G_{p_2}\cup G_{p_3}$ a.a.s.\ contains a path of the desired length, 
then also $G_p$ a.a.s.\ contains such a path. 

By Theorem \ref{thm: cycle}, 
we know that $G_{p_1}$ a.a.s.\ contains a cycle $C$ of length at least $(1-\epsilon)k$. 
Moreover, we may assume that $|C|<\left(1- (1+\epsilon(c))ce^{-c}\right)k$. 
Let $A\subseteq V(G)\setminus V(C)$ be the set of vertices having at least $(1-20\epsilon)k$ neighbors in $V(C)$
and let $B= V(G)\sm (V(C)\cup A)$.

We divide the proof into two parts.
First, we suppose that $|A|\leq 10\epsilon k$.
Hence, if $B\not=\es$, then $G[B]$ has minimum degree at least $10\epsilon k$.

Suppose first that at least $4k\log k$ edges joining $C$ and $B$ in $G$ and denote this set by $E$.
Consider an ordering $b_1,b_2,\ldots$ of the vertices in $B$
and consider an ordering $e_1,e_2,\ldots$ of the edges in $E$ which respects the ordering on $C$,
that is, if $i<j$, then the indices of the edges incident to $b_i$ are smaller than the indices of the edges incident to $b_j$.
For $1\leq i\leq \lceil2\log k\rceil$, let $E_i=\{e_j: (2i-2)k+1 \leq j \leq (2i-1)k\}$.
This implies that there is no vertex $b\in B$ incident to an edge in $E_i$ and $E_j$ for $i\not=j$, 
since a vertex in $B$ has at most $|V(C)|\leq k$ neighbors in $C$.
Moreover, with probability $1- e^{- \frac{c}{3}}$ every set $E_i$ contains at least one edge in $G_{p_2}$ independently for every $i$.
Thus by Chernoff's inequality, at least $\log k$ sets $E_i$ contain an edge in $G_{p_2}$ with probability $1-o(1)$.
Let $S$ be a set of $\log k$ vertices in $B$ incident to an edge in $G_{p_2}$.
By Lemma~\ref{lem: logk-starting-path}, with probability $1-o(1)$, there is a path in $G_{p_3}[B]$ starting in $S$ of length, say, $\epsilon k$.
Combining $C$, a suitable edge in some $E_i$, and this path leads to a path in $G_{p_1}\cup G_{p_2}\cup G_{p_3}$ of length at least $k$ with probability $1-o(1)$.

Therefore, we may assume that at most $4k\log k$ edges joining $C$ and $B$ in $G$.
Hence 
\begin{align*}
	|A \cup C|\geq k - 5\log k,
\end{align*}
otherwise every vertex in $C$ has at least $5\log k$ neighbors in $B$ contradicting our assumption.

Next, we suppose  that there exists a set $A'\subseteq A$ with at least $\sqrt{k}$ many vertices having at least $k^{\frac{2}{3}}$ many neighbors in $B$.
As any vertex in $A'$ is adjacent to at least one vertex in $C$ in $G_{p_2}$ with probability close to $1$, say $\frac{3}{4}$, independently of each other,
with probability $1-o(1)$, 
there exists a $A''$ of size at least $\frac{|A'|}{2}$ 
such that every vertex in $A''$ is adjacent to $C$ in $G_{p_2}$. 
By a similar argument as before, 
with probability $1-o(1)$, 
there are $\log k$ vertices in $B$
such that each of them has a neighbor in $A''$ in $G_{p_2}$.
Again, with probability $1-o(1)$,
there is a path in $G_{p_3}$ at length at least $\epsilon k$ starting in one of these vertices in $B$ and this leads to a path of length at least $k$ in $G_{p_1}\cup G_{p_2}\cup G_{p_3}$ with probability $1-o(1)$.

Therefore,
there are at most $2\sqrt{k}$ vertices $v$ in $A \cup C$ with $d_{B}(v)\geq k^{\frac{2}{3}}$
and let $Z$ be obtained from $A \cup C$ by deleting all these vertices.
Clearly, $|Z|\geq k- 2\sqrt{k}$.
As $|Z|\leq (1+ 10\epsilon)k$, the set $Z$ is a set as in \eqref{eq: dense set}, which is a contradiction.

Thus from now on, we may assume that $|A|\geq 10 \epsilon k$.
Let $A_1\subseteq A$ with $|A_1|= 10 \epsilon k$.
We partition $C$ into $\frac{1}{10\epsilon}$ cycle segments $S_1,S_2,\ldots$ each of length almost $10 \epsilon k$.
As every vertex in $A_1$ has at least $(1- 20\epsilon)k$ neighbors in $C$,
by a simple average argument,
there is a segment, say $S_1$, such that the number of edges between $S_1$ and $A_1$ is at least $(1- 20 \epsilon)|A_1||S_1|$.
Let $H$ be the bipartite subgraph of $G$ which is induced by $A_1$ and $S_1$.
This implies that the bipartite complement of $H$ has at most $2000 \epsilon^3 k^2$ edges.
Of course, this graph contains at most $100\epsilon^{\frac{3}{2}} k$ vertices of degree at least $100\epsilon^{\frac{3}{2}}k$.
Let $H'$ be the graph obtained by deleting these vertices from $H$.
Thus $H'$ has minimum degree at least $(1-20\sqrt{\epsilon})\cdot 10 \epsilon k$.

For some orientation of $C$, 
let $L$ and $R$ be the first and last $\epsilon k$ vertices on $C$ in $S_1$.
Moreover, remove an arbitrary subset of $A_1$ to obtain from the graph $H'\sm (R \cup L)$ a balanced bipartite graph $H''$.
Thus $H''$ has minimum degree at least $(1-25\sqrt{\epsilon})\cdot 8 \epsilon k$.

By Lemma~\ref{lem: bippath},
$H''$ contains a path $P$ of length $15\epsilon k$ in $G_{p_2}$ with probability $1-o(1)$.
Let $P_1$ and $P_2$ be the subpaths at the beginning and at the end of $P$ of length $\epsilon k$, respectively.
By Chernoff's inequality, with probability $1-o(1)$, in $G_{p_3}$, there exists an edge $e_1$ joining a vertex in $L$ and $V(P_1)\cap A_1$ 
and an edge $e_2$ joining a vertex in $R$ and $V(P_2)\cap A_1$.
 
Combining the subpath of $C$ between the endpoints of $e_1$ and $e_2$ that contains the segment $S_2$, 
the subpath of $P$ between the endpoints of $e_1$ and $e_2$, 
and the edges $e_1$ and $e_2$
 results in a cycle in $G_{p_1}\cup G_{p_2}\cup G_{p_3}$ of length
at least $(1- 11\epsilon)k+ 13 \epsilon k \geq k$ and this completes the proof.
\end{proof}

\bibliographystyle{amsplain}
\bibliography{randomgraphs}

\vfill

\small
\vskip2mm plus 1fill
\noindent
Version \today{}
\bigbreak

\noindent
Stefan Ehard
{\tt <stefan.ehard@uni-ulm.de>}\\
Universit\"at Ulm, Ulm\\
Germany\\

\noindent
Felix Joos
{\tt <f.joos@bham.ac.uk>}\\
School of Mathematics, University of Birmingham, Birmingham\\
United Kingdom
\end{document}